\theoremstyle{plain}
\newtheorem{thm}{Theorem}[section]
\newtheorem*{thm*}{Theorem}
\newtheorem{prop}{Proposition}[section]
\newtheorem*{prop*}{Proposition}
\newtheorem{cor}{Corollary}[section]
\newtheorem*{cor*}{Corollary}
\newtheorem*{lem*}{Lemma}
\theoremstyle{definition}
\newtheorem{defn}{Definition}[section]
\newtheorem*{defn*}{Definition}
\newtheorem*{exmp*}{Example}
\newtheorem*{exmps*}{Examples}
\newtheorem*{rem*}{Remark}
\newtheorem*{rems*}{Remarks}
\newtheorem*{note*}{Note}
\newcommand{\N}{{\mathbb N}}
\newcommand{\Z}{{\mathbb Z}}
\newcommand{\R}{{\mathbb R}}
\newcommand{\C}{{\mathbb C}}
\DeclareMathOperator{\Rep}{Re\,}
\newcommand{\eps}{\varepsilon}
\begin{document}
\title[On the Mean Ergodicity of Weak Solutions]
{On the Mean Ergodicity of Weak Solutions\\
of an Abstract Evolution Equation}
\author[Marat V. Markin]{Marat V. Markin}
\address{
Department of Mathematics\newline
California State University, Fresno\newline
5245 N. Backer Avenue, M/S PB 108\newline
Fresno, CA 93740-8001
}
\email{mmarkin@csufresno.edu}
\dedicatory{In loving memory of my teacher, Dr. Miroslav L. Gorbachuk.}
\subjclass[2010]{Primary 34G10, 47A35; Secondary 47D06, 47B40, 47B15, 47B25}
\keywords{Mean ergodicity, weak solution}
\begin{abstract}
Found are conditions of rather general nature sufficient for the existence of the limit at infinity of the \textit{Ces\`{a}ro means}
\begin{equation*}
\frac{1}{t} \int_0^ty(s)\,ds
\end{equation*}
for every \textit{bounded weak solution} $y(\cdot)$ of the abstract evolution equation
\begin{equation*}
y'(t)=Ay(t),\ t\ge 0,
\end{equation*}
with a closed linear operator $A$ in a Banach space $X$.
\end{abstract}
\maketitle
\epigraph{\textit{Say not in grief he is no more, but live in thankfulness that he was.}}{Hebrew Proverb}

\section[Introduction]{Introduction}

The problem of finding conditions, which secure a certain kind of asymptotic behavior for solutions of evolution equations is pivotal in the qualitative theory of such.

For the abstract evolution equation
\begin{equation}\label{1}
y'(t)=Ay(t),\ t\ge 0,
\end{equation}
with a closed linear operator $A$ in a (real or complex) Banach space $(X,\|\cdot\|)$, we find conditions, formulated exclusively in terms of the operator $A$, the space $X$, or both, which are \textit{sufficient} for the existence of the limit at infinity, in the strong or weak sense, of the \textit{Ces\`{a}ro means}
\begin{equation*}
\frac{1}{t} \int_0^ty(s)\,ds,
\end{equation*}
of the equation's every \textit{bounded weak solution} $y(\cdot)$ ($\displaystyle \sup_{t\ge 0}\|y(t)\|<\infty$) (see Preliminaries).

Observe that the notion of the \textit{Ces\`{a}ro limit} 
\begin{equation*}
\lim_{t\to \infty}\frac{1}{t} \int_0^ty(s)\,ds
\end{equation*}
(weak or strong) extends that of the regular one
\begin{equation*}
\lim_{t\to \infty}y(t),
\end{equation*}
(in the same sense), the existence of the latter implying the existence of the former and its coincidence with the latter. The converse, however, is not true. For instance, in $X=\C$ with the absolute-value norm, all solutions
\[
y(t)=e^{it}f,\ t\ge 0,\ f\in X,
\] 
of equation \eqref{1}, with $A$ being the multiplication operator by the \textit{imaginary unit} $i$, are \textit{bounded} and
\begin{equation*}
\lim_{t\to \infty}\frac{1}{t}\int_0^t e^{is}f\,ds
=\lim_{t\to \infty}\dfrac{e^{it}f-f}{it}=0,
\end{equation*}
whereas
\begin{equation*}
\lim_{t\to \infty}e^{it}f
\end{equation*}
exists only for the trivial one ($f=0$).

The results obtained in \cite{Markin1991} for \textit{classical solutions} of \eqref{1} deal with the cases of
\begin{itemize}
\item a \textit{finite-dimensional} space $(X,\|\cdot\|)$,
\item a \textit{continuously} or {\it reducibly invertible} operator $A$, and
\item a \textit{reflexive} space $(X,\|\cdot\|)$,
with $A$ being invertible or generating a certain direct sum decomposition for $X$,
\end{itemize}
and are generalized in \cite{Markin1994Preprint,Markin1994Disser}
to \textit{weak solutions} with the added case of 
\begin{itemize}
\item a \textit{normal operator} $A$ in a complex Hilbert space
$(X,(\cdot,\cdot),\|\cdot\|)$.
\end{itemize}

The purpose of the present paper is to publish for the first time largely revised results on the mean ergodicity of \textit{weak solutions}, which have only seen a very limited printing in the form of the preprint \cite{Markin1994Preprint} and the abstract to the dissertation \cite{Markin1994Disser} (without proof) so far, along with some fresh ones, including the case of a {\it scalar type spectral operator} $A$ in a complex Banach space $(X,\|\cdot\|)$ generalizing and replacing that of a \textit{normal operator}. The reference base has been thoroughly upgraded considering later developments such as \cite{Markin2002(1),Markin2002(2),Markin2004(1),Markin2004(2),Markin2006,Markin2017(1)}. 

\section[Preliminaries]{Preliminaries}

Henceforth, $A$ is supposed to be a \textit{closed linear operator} in a (real or complex) Banach space $(X,\|\cdot\|)$.

\begin{defn}[Weak Solution]\label{def1}\ \\
A strongly continuous vector function $y:[0,\infty)\mapsto X$ is called a \textit{weak solution} of equation \eqref{1} if, for all $t\ge 0$,
\begin{equation}\label{int}
\int_0^ty(s)\,ds\in D(A)\ \text{and} \ y(t)=y(0)+A\int_0^ty(s)\,ds,
\end{equation}
where $D(\cdot)$ is the domain of an operator.
\end{defn}

The operator $A$ being \textit{densely defined}, by {\cite[Lemma]{Ball}} (cf. {\cite[Lemma VI.1.4]{Goldberg}}), it can be easily shown that the \textit{closedness} of $A$ affords the following equivalent definition according to \cite{Ball}.

\begin{defn}[Weak Solution]\label{def2}\ \\
A strongly continuous vector function $y:[0,\infty)\rightarrow X$ is called a {\it weak solution} of equation \eqref{1} if, for any $g^* \in D(A^*)$,
\begin{equation*}
\dfrac{d}{dt}\langle y(t),g^*\rangle = \langle y(t),A^*g^* \rangle,\ t\ge 0,
\end{equation*}
where $A^*$ is the operator {\it adjoint} to $A$ and $\langle\cdot,\cdot\rangle$ is the {\it pairing} between $X$ and its dual space $X^*$.
\end{defn}

The solutions of equation \eqref{1} in the sense of Definition \ref{def1}, in which the existence of the adjoint operator $A^*$ is not required, and hence, $A$ need not be densely defined, are also called \textit{``mild solutions"} (cf. {\cite[Definition II.6.3]{Engel-Nagel}}). Here, for consistency, we adhere to the term \textit{``weak solutions"} as in \cite{Markin1994Preprint,Markin1994Disser,Markin2002(1)}.

Observe that, \textit{a priori}, the {\it weak solutions} of equation \eqref{1} need not be differentiable in the strong sense or take values in the domain $D(A)$ of the operator $A$.

The notion of the \textit{weak solution} generalizes that of the \textit{classical} one, strongly differentiable on $[0,\infty)$ and satisfying the equation in the traditional plug-in sense, the {\it classical solutions} being precisely the \textit{weak} ones strongly differentiable on $[0,\infty)$.
For instance, if the operator $A$ generates a $C_0$-semigroup $\left\{T(t)\right\}_{t\ge 0}$, which is equivalent to the \textit{well-posedness} of the associated \textit{abstract Cauchy problem}
\begin{equation}\label{ACP}
\begin{cases}
y'(t)=Ay(t),\ t\ge 0,\\
y(0)=f
\end{cases}
\end{equation}
in the sense of 
{\cite[Definition II.6.8]{Engel-Nagel}}, the {\it general weak solution} of equation \eqref{1} is of the form
\begin{equation*}
y(t)=T(t)f,\ t\ge 0,f\in X,
\end{equation*}
{\cite[Proposition II.6.4]{Engel-Nagel}} (see also {\cite[Theorem]{Ball}}), whereas the {\it general classical solution} of \eqref{1} is of the form
\begin{equation*}
y(t)=T(t)f,\ t\ge 0,f\in D(A),
\end{equation*}
{\cite[Proposition II.6.3]{Engel-Nagel}}
(see also \cite{Hille-Phillips}), the two, by the \textit{Closed Graph Theorem}, being the same \textit{iff} $A$ is a \textit{bounded} operator on $X$, in which case
\begin{equation*}
T(t)=e^{tA}:=\sum_{n=0}^\infty \dfrac{t^n}{n!}A^n,\ t\ge 0.
\end{equation*}

As is known \cite{Goldstein-Radin-Showalter,Lin-Shaw,Shaw}, if the operator $A$ generates a {\it bounded} $C_0$-semigroup $\left\{T(t)\right\}_{t\ge 0}$, which immediately implies boundedness for all weak solutions of equation \eqref{1}, 
\begin{equation*}
s\mbox{-}\!\!\lim_{t \to \infty}\frac{1}{t} \int_0^t T(s)f\,ds,
\end{equation*}
($\displaystyle s\mbox{-}\!\!\lim_{t \to \infty}$ stands for the \textit{strong limit}) exists
for each $f\in X$ \textit{iff} $X$ is decomposable into the direct sum
\begin{equation}\label{decomp1}
X=\ker A \oplus \overline{R(A)},
\end{equation}
where $\ker A$ is the {\it kernel} of $A$ and $\overline{R(A)}$ is the closure of its {\it range}, $R(A)$. In this case,
\begin{equation*}
s\mbox{-}\!\!\lim_{t \to \infty}\frac{1}{t} \int_0^t T(s)f\,ds=Pf,
\end{equation*}
where $P$ is the \textit{projection operator} onto $\ker A$ along $\overline{R(A)}$, called the \textit{mean ergodic projection} of the semigroup. The space $X$ being {\it reflexive}, decomposition \eqref{decomp1} holds automatically
(cf. \cite{Hille-Phillips,Komatsu1969,Butyrin1994}).

Recall that, we impose conditions on the space $X$ and the operator $A$ only. The latter example does fall into this framework, the operator's $A$ generating a {\it bounded} $C_0$-semigroup being characterized by the corresponding case of the \textit{Generation Theorem} {\cite[Theorem II.3.8]{Engel-Nagel}} in terms of the location of its \textit{spectrum}, $\sigma(A)$, and certain growth estimates for all natural powers of its \textit{resolvent}, 
\[
R(\lambda,A):=(A-\lambda I)^{-1}
\]
($I$ is the \textit{identity operator} on $X$). 

Observe also that, in our discourse, Cauchy problem \eqref{ACP} associated with abstract evolution
equation \eqref{1} may be well- or ill-posed. In the latter case, the only bounded weak solutions, whose existence is guaranteed, are the \textit{equilibrium solutions}
\begin{equation*}
y(t)= f\in \ker A,\ t\ge 0,
\end{equation*}
and the \textit{eigenvalue solutions}
\begin{equation*}
y(t)=e^{\lambda t} f,\ t\ge 0,
\end{equation*}
corresponding to the nonzero eigenvalues
$\lambda$ of $A$ with $\Rep\lambda\le 0$, if any.  

In what follows, the notations 
$\displaystyle s\mbox{-}\!\!\lim_{t \to \infty}$ 
and 
$\displaystyle w\mbox{-}\!\!\lim_{t \to \infty}$
are used for the limits in the \textit{strong} and \textit{weak} sense, $\rho(\cdot)$ and $\sigma(\cdot)$ designate the \textit{resolvent set} and \textit{spectrum} of an operator, respectively,
and $L(X)$ denotes the space of bounded linear operators on $(X,\|\cdot\|)$. 

\section{Mean Ergodicity of a Particular Weak Solution}

In the aforementioned case of $A$ generating a bounded $C_0$-semigroup in a complex Banach space, the \textit{Ces\`{a}ro means} of every bounded weak solution of equation \eqref{1} converge at infinity to an \textit{equilibrium state}. The following statement confirms that this profound fact is not coincidental.

\begin{prop}[Mean Ergodicity of a Particular Weak Solution]\label{part}\ \\
Let $y(\cdot)$ be a bounded weak solution of equation \eqref{1} with a closed linear operator $A$ in a (real or complex) Banach space $(X,\|\cdot\|)$. If, for a sequence $\left\{t_n\right\}_{n=1}^\infty\subset (0,\infty)$ with $t_n\to \infty$, $n\to\infty$,
\begin{equation}\label{weak1}
w\mbox{-}\!\!\lim_{n \to \infty}\frac{1}{t_n} \int_0^{t_n}y(s)\,ds=y_\infty\in X,
\end{equation}
then $y_\infty\in\ker A$. 
\end{prop}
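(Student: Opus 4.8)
The plan is to exploit the integrated form of the weak solution from Definition~\ref{def1} together with the closedness of $A$. First I would write down, for each $n$, the defining identity $y(t_n) = y(0) + A\int_0^{t_n} y(s)\,ds$, and divide through by $t_n$ to obtain
\begin{equation*}
\frac{y(t_n)}{t_n} = \frac{y(0)}{t_n} + A\left(\frac{1}{t_n}\int_0^{t_n} y(s)\,ds\right).
\end{equation*}
Since $y(\cdot)$ is bounded and $t_n\to\infty$, the left-hand side converges to $0$ in norm (hence weakly), and the term $y(0)/t_n$ likewise tends to $0$; therefore the vectors $z_n := A\bigl(\tfrac{1}{t_n}\int_0^{t_n} y(s)\,ds\bigr)$ satisfy $w\text{-}\lim_{n\to\infty} z_n = 0$.

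Next I would set $x_n := \tfrac{1}{t_n}\int_0^{t_n} y(s)\,ds$, so that $x_n\in D(A)$ for every $n$ by~\eqref{int}, $Ax_n = z_n$, and by hypothesis~\eqref{weak1} we have $w\text{-}\lim_{n\to\infty} x_n = y_\infty$ while $w\text{-}\lim_{n\to\infty} Ax_n = 0$. The goal is to conclude $y_\infty\in D(A)$ with $Ay_\infty = 0$, i.e.\ $y_\infty\in\ker A$. This is precisely the assertion that the graph of $A$ is \emph{weakly sequentially closed}, which follows from ordinary (norm) closedness of $A$: a closed linear operator has a closed graph in $X\times X$, and since a norm-closed convex (here linear) subspace of a Banach space is weakly closed (Mazur's theorem), the graph of $A$ is also weakly closed, hence weakly sequentially closed. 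Applying this to the sequence $(x_n, Ax_n)$, which converges weakly to $(y_\infty, 0)$ in $X\times X$, gives $(y_\infty, 0)\in \operatorname{graph}(A)$, that is, $y_\infty\in D(A)$ and $Ay_\infty = 0$.

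The one point that needs a little care — and which I expect to be the only real obstacle — is the passage from norm-closedness of $A$ to weak sequential closedness of its graph; the cleanest route is the Mazur-theorem argument just sketched (the graph is a norm-closed linear subspace of $X\times X$, hence weakly closed), applied to the product space $X\times X$ with, say, the norm $\|(u,v)\| = \|u\|+\|v\|$ and the weak topology it induces, under which $(x_n,Ax_n)\rightharpoonup(y_\infty,0)$. One should also double-check the elementary estimate $\|y(t_n)/t_n\|\le t_n^{-1}\sup_{t\ge 0}\|y(t)\|\to 0$ and that weak convergence of $x_n$ to $y_\infty$ is genuinely used (it is, to identify the first coordinate of the limit). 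With these observations in place the conclusion $y_\infty\in\ker A$ is immediate.
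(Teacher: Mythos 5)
Your proposal is correct and follows essentially the same route as the paper: divide the integrated identity from Definition~\ref{def1} by $t_n$, use the boundedness of $y(\cdot)$ to get $A\bigl[\tfrac{1}{t_n}\int_0^{t_n}y(s)\,ds\bigr]\to 0$, and invoke the weak closedness of the graph of the closed operator $A$ (the paper derives it from the Hahn--Banach Theorem, which is the same fact you obtain via Mazur's theorem). No gaps; the argument is complete as sketched.
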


\begin{proof}\quad
By \eqref{int}, for all $n\ge 1$,
\begin{equation*}
\int_0^{t_n}y(s)\,ds\in D(A)\quad \text{and} \quad \dfrac{1}{t_n}[y(t_n)-y(0)]=A\left[\dfrac{1}{t_n}\int_0^{t_n}y(s)\,ds\right].
\end{equation*}

Whence, considering the \textit{boundedness} of $y(\cdot)$, we infer that
\begin{equation}\label{Astrong1}
s\mbox{-}\!\!\lim_{n \to \infty}A\left[\dfrac{1}{t_n}\int_0^{t_n}y(s)\,ds\right]=0.
\end{equation}

Since, as follows from the \textit{Hahn-Banach Theorem}, the graph of the closed operator $A$ is also \textit{weakly closed} (see, e.g., \cite{Dun-SchI}), \eqref{weak1} and \eqref{Astrong1} jointly imply that
\begin{equation*}
y_\infty\in D(A)\ \text{and}\ Ay_\infty=0,
\end{equation*}
and hence, $y_\infty\in \ker A$.
\end{proof}

We instantly obtain the following

\begin{cor}
Let $y(\cdot)$ be a bounded weak solution of equation \eqref{1} with a closed linear operator $A$ in a (real or complex) Banach space $(X,\|\cdot\|)$. If, for a sequence $\left\{t_n\right\}_{n=1}^\infty\subset (0,\infty)$ with $t_n\to \infty$, $n\to\infty$,
\begin{equation*}
w\mbox{-}\!\!\lim_{n\to \infty}y(t_n)=y_\infty\in X,
\end{equation*}
then $y_\infty\in\ker A$. 
\end{cor}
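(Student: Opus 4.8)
The plan is to deduce this directly from Proposition~\ref{part}: it suffices to verify that the hypothesis forces the weak Ces\`{a}ro limit of $y(\cdot)$ along $\{t_n\}_{n=1}^\infty$ to exist and to coincide with $y_\infty$, whereupon Proposition~\ref{part} applies and delivers $y_\infty\in\ker A$.

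The only substantive step is the passage from a regular (weak) limit of $y(\cdot)$ to the corresponding limit of its Ces\`{a}ro means, and I would carry it out one functional at a time. Fix $x^*\in X^*$ and consider the scalar function $t\mapsto\langle y(t),x^*\rangle$, which is continuous and bounded (by the strong continuity and boundedness of $y(\cdot)$) and which, by the hypothesis, tends to $\langle y_\infty,x^*\rangle$. The elementary fact that the Ces\`{a}ro means of a scalar function having a limit at infinity converge to the same value, combined with the commutation of the bounded functional $x^*$ with the integral, then gives
\[
\Big\langle\frac{1}{t}\int_0^t y(s)\,ds,\,x^*\Big\rangle=\frac{1}{t}\int_0^t\langle y(s),x^*\rangle\,ds\longrightarrow\langle y_\infty,x^*\rangle,\quad t\to\infty .
\]
Since $x^*\in X^*$ is arbitrary, this says exactly that the Ces\`{a}ro means of $y(\cdot)$ converge weakly to $y_\infty$ as $t\to\infty$; in particular,
\[
w\mbox{-}\!\!\lim_{n\to\infty}\frac{1}{t_n}\int_0^{t_n}y(s)\,ds=y_\infty .
\]

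With this in hand, $y(\cdot)$ is a bounded weak solution whose Ces\`{a}ro means along $\{t_n\}$ converge weakly to $y_\infty$, so Proposition~\ref{part} at once yields $y_\infty\in\ker A$. I do not expect a genuine obstacle here: the entire content beyond Proposition~\ref{part} is the routine Ces\`{a}ro-averaging observation above, with the boundedness of $y(\cdot)$ entering only to keep the scalar functions $\langle y(\cdot),x^*\rangle$, and hence their means, under control.
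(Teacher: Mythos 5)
Your reduction to Proposition \ref{part} is the right idea, but the decisive step is not justified by the hypothesis as stated: you assert that $\langle y(t),x^*\rangle\to\langle y_\infty,x^*\rangle$ as $t\to\infty$, whereas the corollary only assumes $w\mbox{-}\!\!\lim_{n\to\infty}y(t_n)=y_\infty$ along a single sequence $t_n\to\infty$. Convergence of a bounded continuous scalar function along one sequence gives no control of its Ces\`aro means: take $X=\C$, $A$ the operator of multiplication by $i$, $y(t)=e^{it}$, and $t_n=2\pi n$; then $y(t_n)\equiv 1\to 1$, while $\frac{1}{t_n}\int_0^{t_n}y(s)\,ds=\frac{e^{2\pi in}-1}{2\pi in}=0$ for every $n$. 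So the passage from the sequential hypothesis to $w\mbox{-}\!\!\lim_{n\to\infty}\frac{1}{t_n}\int_0^{t_n}y(s)\,ds=y_\infty$ is a genuine gap, not a routine averaging observation.

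In fact, the same example shows that no argument can close this gap for the statement read literally: there $\ker A=\{0\}$ and $y_\infty=1\notin\ker A$, so the corollary in its sequential form fails, and the hypothesis has to be understood as the existence of the full weak limit $w\mbox{-}\!\!\lim_{t\to\infty}y(t)=y_\infty$ (equivalently, convergence to the same limit along every sequence tending to infinity). Under that reading your argument is precisely the intended ``instant'' deduction the paper has in mind (no separate proof is given there): for each $x^*\in X^*$ the scalar function $\langle y(\cdot),x^*\rangle$ converges at infinity, hence so do its Ces\`aro means to the same value, the functional commutes with the integral, so the Ces\`aro means of $y(\cdot)$ converge weakly to $y_\infty$, and Proposition \ref{part} yields $y_\infty\in\ker A$. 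You should therefore either restate the hypothesis as the full weak limit or note explicitly that the sequential version is false, citing the rotation example above.
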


\section{Finite-Dimensional Space}

\subsection{Preliminaries}

Let $(X,\|\cdot\|)$ be a complex finite-dimensional Banach space with $\dim X=n$ ($n\in \N$) and $A\in L(X)$. 

Observe that, the strong and weak topologies on $X$ coinciding (see, e.g., \cite{Dun-SchI}), all {\it weak solutions} of equation \eqref{1} are {\it classical} ones, strong and weak limits are indistinguishable and we can use the notation $\displaystyle \lim_{t \to \infty}$ to stand for either one. 

Furthermore, the operator $A$ admits the following \textit{spectral decomposition} readily obtained from its Jordan canonical matrix representation (see, e.g., \cite{Horn-Johnson}):
\begin{equation}\label{sd}
A=\sum_{j=1}^m[\lambda_jP_j+Q_j],
\end{equation}
where 
\begin{itemize}
\item $\lambda_j$, $j=1,\dots,m$ ($m\in \N$, $1\le m\le n$), are distinct {\it eigenvalues} of $A$ forming its \textit{spectrum}, $\sigma(A)$,
\item $P_j$, $j=1,\dots,m$, are {\it projection operators}, and 
\item $Q_j:=(A-\lambda_jI)P_j=P_j(A-\lambda_jI)$, $j=1,\dots,m$, are {\it nilpotent operators}
\end{itemize}
(see, e.g., \cite{Dun-SchI,Kato,Glazman-Lyubich,Daletsky-Krein}).

A few important observations concerning the structure of the \textit{spectral decomposition} are in order.

\begin{itemize}
\item The projection $P_j$, $j=1,\dots,m$, is the \textit{spectral projection}, or {\it Riesz projection}, of $A$ at $\lambda_j$. Recall that the latter is defined for an arbitrary $\lambda\in \C$ in the sense of the Dunford-Riesz operational calculus as
\begin{equation*}
P(\lambda,A)=-\dfrac{1}{2\pi i}\int\limits_{\gamma} R(z,A)\,dz,
\end{equation*}
where $\gamma$ is a positively oriented rectifiable Jordan contour enclosing $\lambda$, which along with its interior, except, possibly, for $\lambda$, is contained in the \textit{resolvent set} $\rho(A)$ of $A$, and $R(\cdot,A)$ is the \textit{resolvent function} of $A$ \cite{Dun-SchI}. 

Observe that, $P(\lambda,A)=0$ \textit{iff} $\lambda\in \rho(A)$. Provided $\lambda$ is an {\it eigenvalue} of $A$, the range $R(P(\lambda,A))$ of $P(\lambda,A)$ is a subspace in $X$, 
which is not to be confused with the {\it eigenspace} of $\lambda$, $\ker(A-\lambda I)$. In fact, the former contains the latter, $\dim R(P(\lambda,A))$ being the {\it algebraic multiplicity} of $\lambda$, i.e., its multiplicity as a zero of the {\it characteristic polynomial} 
of $A$, or the sum of the sizes of all Jordan blocks corresponding to $\lambda$, and $\dim\ker(A-\lambda I)$ being the {\it geometric multiplicity} of $\lambda$, or 
the number of Jordan blocks corresponding to $\lambda$ (see, e.g., \cite{Horn-Johnson}). In fact,
\[
\dim R(P(\lambda,A))=\dim \ker(A-\lambda I)
\]
\textit{iff} all Jordan blocks of $\lambda$ are of size 1 (see, e.g., \cite{Dun-SchI,Horn-Johnson}).
\item The operator $Q_j$, $j=1,\dots,m$, is {\it nilpotent}, the \textit{index} of $\lambda_j$,
\begin{equation}\label{nilp}
k_j:=\min \left\{k\in \N\;\middle|\;Q_j^k=0\right\}
\end{equation} 
being the multiplicity of $\lambda_j$ as a zero of the \textit{minimal polynomial} of $A$, which is the size of the largest Jordan block of $\lambda_j$ \cite{Glazman-Lyubich,Horn-Johnson,Kato}.

Observe that $R(P(\lambda_j,A))=\ker(A-\lambda_j I)^{k_j}$, $j=1,\dots,m$.
\item The operators $P_j$ and $Q_j$,
$j=1,\dots,m$, are bound by the following relations:
\begin{equation}\label{relations}
\begin{split}
&\sum_{j=1}^m P_j=I,\\
&P_iP_j=\delta_{ij}P_i,\ i,j=1,\dots,m,\\
&P_iQ_j=Q_jP_i=\delta_{ij}Q_j,\ i,j=1,\dots,m,\\
&Q_iQ_j=\delta_{ij}Q_i^2,\ i,j=1,\dots,m,
\end{split}
\end{equation} 
($\delta_{ij}$ is the {\it Kronecker delta}).
\end{itemize}

The solutions of equation \eqref{1} are given by the familiar exponential formula
\begin{equation}\label{exp0}
y(t)=e^{tA}f,\ t\ge 0,f\in X,
\end{equation} 
where
\begin{equation*}
e^{tA}=\sum_{k=0}^\infty\dfrac{t^k}{k!}A^k,\ t\ge 0,f\in X,
\end{equation*} 
which, due to \eqref{sd}--\eqref{relations}, 
can be rewritten as the finite sum
\begin{equation}\label{exp}
e^{tA}=\sum_{j=1}^m e^{t\lambda_j}P_j\sum_{k=0}^{k_j-1} \dfrac{t^k}{k!}Q_j^k=\sum_{j=1}^m \sum_{k=0}^{k_j-1}e^{t\lambda_j} \dfrac{t^k}{k!}(A-\lambda_jI)^kP_j,\ t\ge 0,
\end{equation}
(cf. {\cite[Proposition I.2.6]{Engel-Nagel2006}}, see
also \cite{Glazman-Lyubich,Daletsky-Krein,Kato}).

This representation, instrumental in proving the classical \textit{Lyapunov stability theorem} \cite{Lyapunov1892} 
(cf. {\cite[Theorem I.2.10]{Engel-Nagel}}), is used to prove the succeeding statement.

\subsection{Main Statement}

\begin{thm}\label{Thm1}
Let $(X,\|\cdot\|)$ be a complex finite-dimensional Banach space and $A\in L(X)$. Then, for each bounded (weak) solution $y(\cdot)$ of equation \eqref{1},   
\begin{equation*}
\lim_{t\to\infty}\dfrac{1}{t}\int_0^ty(s)\,ds=P(0,A)y(0),
\end{equation*} 
where $P(0,A)$ is the spectral projection of $A$ at $0$.

The spectrum of the operator $A$ containing no pure imaginary values,
\begin{equation*}
\lim_{t\to\infty}y(t)=P(0,A)y(0).
\end{equation*} 
\end{thm}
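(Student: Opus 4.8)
The plan is to plug the spectral decomposition \eqref{exp} of $e^{tA}$ into the explicit solution formula \eqref{exp0} and compute the Ces\`{a}ro means term by term. Writing $y(t)=e^{tA}y(0)=\sum_{j=1}^m\sum_{k=0}^{k_j-1}e^{t\lambda_j}\dfrac{t^k}{k!}(A-\lambda_jI)^kP_jy(0)$, I would first use the relations \eqref{relations} to observe that $y(\cdot)$ is a direct sum of the components $P_jy(\cdot)$, each living in $R(P_j)$, and that $y(\cdot)$ is bounded iff each component is. So the first step is to pin down exactly which $\lambda_j$ and $k$ can contribute to a bounded solution: a summand $e^{t\lambda_j}t^k/k!$ stays bounded on $[0,\infty)$ iff either $\Rep\lambda_j<0$ (any $k$), or $\Rep\lambda_j=0$ and $k=0$. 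In particular, if $y(0)$ has a nonzero component in $R(P_j)$ for some $\lambda_j$ with $\Rep\lambda_j=0$, the nilpotent $Q_j$ must already annihilate that component, forcing that component to lie in $\ker(A-\lambda_jI)$.

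The second step is the actual limit computation. For each summand I need $\displaystyle\lim_{t\to\infty}\frac1t\int_0^t e^{s\lambda_j}\frac{s^k}{k!}\,ds$. When $\Rep\lambda_j<0$ the integral $\int_0^\infty e^{s\lambda_j}s^k\,ds$ converges absolutely, so dividing by $t$ gives limit $0$. When $\lambda_j=0$ the only surviving term is $k=0$, whose Ces\`{a}ro mean is identically $1$. When $\Rep\lambda_j=0$ but $\lambda_j\neq0$, only $k=0$ survives on bounded components, and $\frac1t\int_0^t e^{is\,\Imp\lambda_j}\,ds=\dfrac{e^{it\,\Imp\lambda_j}-1}{it\,\Imp\lambda_j}\to0$ exactly as in the $X=\C$ example in the Introduction. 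Summing up, every term dies except $P_0 y(0)$ where $P_0:=P(0,A)$ is the Riesz projection at $0$ (which is $0$ if $0\notin\sigma(A)$), giving $\displaystyle\lim_{t\to\infty}\frac1t\int_0^t y(s)\,ds=P(0,A)y(0)$. I would phrase this cleanly by splitting $I=P_0+\sum_{\lambda_j\neq0}P_j$ and noting that boundedness of $y$ kills the nilpotent parts at pure imaginary eigenvalues and makes the $\Rep\lambda_j<0$ blocks decay.

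For the second assertion, assume $\sigma(A)$ contains no pure imaginary value. Then every $\lambda_j\neq0$ has $\Rep\lambda_j\neq0$; the components with $\Rep\lambda_j<0$ satisfy $e^{tA}P_jy(0)\to0$ directly from \eqref{exp}, while a component with $\Rep\lambda_j>0$ would blow up, so boundedness of $y(\cdot)$ forces $P_jy(0)=0$ for all such $j$. Hence $y(t)=e^{tA}P_0y(0)+\sum_{\Rep\lambda_j<0}e^{tA}P_jy(0)\to P_0 y(0)=P(0,A)y(0)$, where the first term is constant because $Q_0$ must also annihilate $P_0y(0)$ (again by boundedness, since $t^k$ with $k\ge1$ is unbounded and $\Rep 0=0$), i.e. $P_0y(0)\in\ker A$. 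This matches the general principle $y_\infty\in\ker A$ of Proposition \ref{part}.

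I do not expect a serious obstacle here — everything reduces to the elementary asymptotics of $e^{t\lambda}t^k$ and $\frac1t\int_0^t e^{t\lambda}s^k\,ds$ — but the one point requiring care is the bookkeeping that ties \emph{boundedness of $y(\cdot)$} to the precise vanishing conditions on the components $P_jy(0)$ and $Q_jP_jy(0)$: because the $R(P_j)$ are complementary (from $\sum P_j=I$ and $P_iP_j=\delta_{ij}P_i$), a sum of functions valued in these independent subspaces is bounded iff each summand is, and within a single block $e^{t\lambda_j}\sum_{k=0}^{k_j-1}\frac{t^k}{k!}Q_j^k v$ is bounded iff $v\in\ker(A-\lambda_jI)$ whenever $\Rep\lambda_j=0$. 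Making that ``iff'' rigorous (e.g. via linear independence of $\{e^{t\lambda_j}t^k\}$ as functions of $t$, or by examining the highest surviving power) is the only place where I would slow down and write details.
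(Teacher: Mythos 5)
Your proposal is correct and follows essentially the same route as the paper: expand $y(t)=e^{tA}y(0)$ via the Jordan-type decomposition \eqref{exp}, use boundedness blockwise (the paper does this through the equivalent norm $\|g\|_1=\sum_j\|P_jg\|$, you through applying the projections $P_j$) to force $P_jy(0)=0$ when $\Rep\lambda_j>0$ and $Q_jP_jy(0)=0$ when $\Rep\lambda_j=0$, and then compute the Ces\`aro limits term by term. The one step you flag as needing care is handled in the paper exactly by the ``highest surviving power'' lower-bound estimate you suggest, so no gap remains.
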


\begin{proof}\quad 
Let $y(\cdot)$ be an arbitrary bounded weak solution of equation \eqref{1}. Then, by \eqref{exp0} and \eqref{exp},
\begin{equation}\label{exp1}
y(t)=e^{tA}f=\sum_{j=1}^m e^{t\lambda_j}P_j\sum_{k=0}^{k_j-1} \dfrac{t^k}{k!}Q_j^k f,\ t\ge 0,
\end{equation}
with $y(0)=f\in X$.

Since, by \eqref{relations}, the spectral projections $P_j$, $j=1,\dots,m$, form a {\it resolution of the identity}, we can introduce a new norm on $X$ as follows:
\begin{equation*}
X\ni g\mapsto\|g\|_1:=\sum_{j=1}^m\|P_jg\|,
\end{equation*}
which, considering that $X$ is \textit{finite-dimensional}, is \textit{equivalent} to the original one (see, e.g., \cite{Trenogin}).

This implies that the boundedness of $y(\cdot)$ is equivalent to the boundedness of each summand 
\begin{equation}\label{summand}
e^{t\lambda_j}P_j\sum_{k=0}^{k_j-1} \dfrac{t^k}{k!}Q_j^k f,\ t\ge 0,\
j=1,\dots,m,
\end{equation}
in representation \eqref{exp1}.

For each $j=1,\dots,m$, we have the following cases:
\begin{itemize}
\item[(1)] If $\Rep\lambda_j<0$, 
the corresponding summand \eqref{summand} is, obviously, boun\-ded and converges to $0$ as $t\to\infty$.
\item[(2)] If $\Rep\lambda_j\ge 0$, since, in view of \eqref{relations},
\[
\left\|e^{t\lambda_j}P_j\sum_{k=0}^{k_j-1} \dfrac{t^k}{k!}Q_j^k f\right\|
\ge e^{t\Rep\lambda_j}\left[\dfrac{t^{k_j-1}}{(k_j-1)!}\left\|Q_j^{k_j-1}P_jf\right\|
-\sum_{k=0}^{k_j-2}\dfrac{t^k}{k!}\left\|Q_j^kP_jf\right\|\right],\ t\ge 0,
\]
and hence, the boundedness of the summand necessarily implies that 
\begin{equation*}
Q_j^{k_j-1}P_jf=0.
\end{equation*}

Continuing in this fashion, we arrive at the following conclusion:
\begin{itemize}
\item if $\Rep\lambda_j>0$,
\begin{equation*}
Q_j^kP_jf=0,\ k=0,\dots,k_j-1,
\end{equation*}
i.e., $P_jf=0$, and hence,
\begin{equation*}
e^{t\lambda_j}P_j\sum_{k=0}^{k_j-1} \dfrac{t^k}{k!}Q_j^k f=0,\ t\ge 0;
\end{equation*}
\item if $\Rep\lambda_j=0$, 
\begin{equation*}
Q_j^kP_jf=0,\ k=1,\dots,k_j-1,
\end{equation*}
i.e.,
\begin{equation*}
e^{t\lambda_j}P_j\sum_{k=0}^{k_j-1} \dfrac{t^k}{k!}Q_j^k f=e^{t\lambda_j}P_jf,\ t\ge 0.
\end{equation*}
\end{itemize}
\end{itemize}

Thus, for a bounded weak solution $y(\cdot)$ of \eqref{1}, representation \eqref{exp1} acquires the form
\begin{equation}\label{expb}
y(t)=\sum_{j:\,\Rep\lambda_j<0}e^{t\lambda_j}P_j\sum_{k=0}^{k_j-1} \dfrac{t^k}{k!}Q_j^kf
+
\sum_{j:\,\Rep\lambda_j=0}e^{t\lambda_j}P_jf,\ t\ge 0,
\end{equation}
in which 
\begin{itemize}
\item the sum corresponding to the eigenvalues of $A$ with negative real part, obviously, vanishes at infinity:
\begin{equation*}
\lim_{t\to\infty}\sum_{j:\,\Rep\lambda_j<0}e^{t\lambda_j}P_j\sum_{k=0}^{k_j-1} \dfrac{t^k}{k!}Q_j^kf=0;
\end{equation*}
\item the sum corresponding to the \textit{pure imaginary} eigenvalues of $A$ vanishes at infinity in the Ces\'aro sense since, for each $\lambda_j\in i\R\setminus \left\{0\right\}$,
\begin{equation*}
\lim_{t\to\infty}\dfrac{1}{t}\int_0^t e^{s\lambda_j}P_jf\,ds
=\lim_{t\to\infty}\dfrac{e^{t\lambda_j}-1}{t\lambda_j}P_jf=0;
\end{equation*}
\item provided, for some $j=1,\dots,m$, $\lambda_j=0$, the corresponding constant term in \eqref{expb} is $P_jf$, where $P_j=P(0,A)$.

If $0\in \rho(A)$, as we noted above, $P(0,A)=0$.
\end{itemize}

Whence, the conclusion of the statement follows immediately.
\end{proof}

\section{Continuously/Reducibly Invertible Operator}

\subsection{Preliminaries}

Now, without imposing any restrictions on the space $(X,\|\cdot\|)$, we require that the closed operator $A$ be \textit{reducibly invertible}, i.e.,
that its range $R(A)$ be a \textit{closed subspace} in $X$ and the direct sum decomposition
\begin{equation}\label{ri}
X=\ker A\oplus R(A)
\end{equation}
hold (see, e.g., \cite{Korolyuk-Turbin}, also \cite{Kato,Daletsky-Krein}).

The reducible invertibility of $A$ gives rise to the fact that
\begin{equation*}
(A+P)^{-1}\in L(X).
\end{equation*}

We show it here for the reader's convenience.

Let $P$ be the projection operator onto $\ker A$ along $R(A)$ and suppose that, for some $f\in D(A)$, 
\[(A+P)f=0.\]
Then $Af=-Pf$, which, since the subspaces $\ker A$ and $R(A)$ are \textit{disjoint}, implies 
\[Af=Pf=0.\]
Whence, we infer that $f\in \ker A$, and therefore, $f=Pf=0$. 

Thus, the closed linear operator $A+P$ has an inverse $(A+P)^{-1}$.

Now, let us show that 
\[R(A+P)=X.\]
Indeed, for an arbitrary $f\in X$, in view of \eqref{ri},
\[f=f_1+Af_2\]
with $f_1\in \ker A$ and $f_2\in D(A)$. Further,
\[f_2=f_3+f_4,\]
where $f_3\in \ker A$ and $f_4\in R(A)$. Since $f_4=f_2-f_3\in D(A)\cap R(A)$ and $Af_4=Af_2$, we have:
\[(A+P)(f_1+f_4)=Pf_1+Af_4=f_1+Af_2=f.\]

Hence, the inverse $(A+P)^{-1}$, which is a closed linear operator defined on the whole space $X$, is \textit{bounded} by the {\it Closed Graph Theorem}.

\subsection{Main Statement}

\begin{thm}\label{Thm2}
Let $A$ be a closed linear operator in a (real or complex) Banach space $(X,\|\cdot\|)$.
\begin{enumerate}[label=(\roman*)]
\item If the operator $A$ has a bounded inverse $A^{-1}$, for each bounded weak solution $y(\cdot)$ of equation \eqref{1},
\[
s\mbox{-}\!\!\lim_{t\to\infty}\dfrac{1}{t}\int_0^ty(s)\,ds=0
\quad \text{with}\quad \left\|\dfrac{1}{t}\int_0^ty(s)\,ds\right\|
=\mathcal{O}\left(\dfrac{1}{t}\right),\ t\to\infty.
\]
\item If the operator $A$ is reducibly invertible, for each bounded weak solution $y(\cdot)$ of equation \eqref{1},
\[
s\mbox{-}\!\!\lim_{t\to\infty}\dfrac{1}{t}\int_0^ty(s)\,ds=Py(0),
\]
where $P$ is the projection onto $\ker A$ along $R(A)$, with 
\[
\left\|\dfrac{1}{t}\int_0^ty(s)\,ds-Py(0)\right\|
=\mathcal{O}\left(\dfrac{1}{t}\right),\ t\to\infty.
\]
\end{enumerate}
\end{thm}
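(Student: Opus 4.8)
The plan is to extract from the defining integral identity \eqref{int} the operator relation
\[
A\int_0^t y(s)\,ds = y(t)-y(0),\qquad \int_0^t y(s)\,ds\in D(A),\quad t\ge 0,
\]
valid for every weak solution $y(\cdot)$, and then to invert $A$ --- or a suitable bounded modification of it --- on the right-hand side, whose norm is kept under control by the assumed boundedness of $y(\cdot)$. Throughout I put $M:=\sup_{t\ge 0}\|y(t)\|<\infty$.

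For part (i), since $A^{-1}\in L(X)$, I would simply apply $A^{-1}$ to both sides of the above identity to obtain
\[
\frac{1}{t}\int_0^t y(s)\,ds=\frac{1}{t}A^{-1}\bigl[y(t)-y(0)\bigr],\quad t>0,
\]
whence $\bigl\|\frac1t\int_0^t y(s)\,ds\bigr\|\le 2M\|A^{-1}\|/t$, which yields both the rate $\mathcal{O}(1/t)$ and the strong limit $0$.

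For part (ii), I would first note that, $A$ being closed, $\ker A$ is closed, and $R(A)$ is closed by hypothesis, so the projection $P$ onto $\ker A$ along $R(A)$ belongs to $L(X)$; in particular $P$ commutes with the integral, and, since $R(P)=\ker A$ and $\ker P=R(A)$, one has the relations $AP=0$ on $X$ and $PA=0$ on $D(A)$. Applying $P$ to \eqref{int} then shows that the kernel part of the solution is frozen, $Py(t)=Py(0)$ for all $t\ge 0$, so $P\bigl[\frac1t\int_0^t y(s)\,ds\bigr]=Py(0)$. For the complementary part, set $Z(t):=(I-P)\int_0^t y(s)\,ds$; as $\int_0^t y(s)\,ds\in D(A)$ and $P$ maps into $\ker A\subset D(A)$, one has $Z(t)\in D(A)$, and, using $PZ(t)=0$ and $AP=0$,
\[
(A+P)Z(t)=AZ(t)=A\int_0^t y(s)\,ds=y(t)-y(0),\quad t\ge 0.
\]
Invoking the bounded inverse $(A+P)^{-1}\in L(X)$ established in the Preliminaries, I conclude $Z(t)=(A+P)^{-1}\bigl[y(t)-y(0)\bigr]$, and assembling the two pieces gives
\[
\frac{1}{t}\int_0^t y(s)\,ds=Py(0)+\frac{1}{t}(A+P)^{-1}\bigl[y(t)-y(0)\bigr],\quad t>0,
\]
from which the limit $Py(0)$ and the $\mathcal{O}(1/t)$ rate follow exactly as in part (i); observe that (i) is the special case $\ker A=\{0\}$, $P=0$, $A+P=A$.

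The calculations are all routine; the only points demanding attention are the bookkeeping of domains --- checking $Z(t)\in D(A)$ and that $A$, $P$, and $(A+P)^{-1}$ are each applied only to admissible vectors --- and the identification $(A+P)Z(t)=y(t)-y(0)$, which rests entirely on the relations $AP=0$ and $PA=0$ furnished by the decomposition \eqref{ri}. I do not anticipate any real obstacle: the whole argument amounts to converting \eqref{int} into an operator equation and then exploiting the boundedness of $y(\cdot)$.
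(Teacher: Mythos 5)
Your proposal is correct, and its core mechanism is the same as the paper's: convert \eqref{int} into $A\int_0^t y(s)\,ds=y(t)-y(0)$ and invert a boundedly invertible operator against the bounded right-hand side, with part (i) being verbatim the paper's argument (one small remark: the hypothesis only gives $A^{-1}$ bounded on $R(A)$, not $A^{-1}\in L(X)$, but this is harmless since you only ever apply $A^{-1}$ to $y(t)-y(0)\in R(A)$). In part (ii) your bookkeeping differs from the paper's: the paper forms the auxiliary function $u(t):=y(t)-Py(0)$, checks that $u(\cdot)$ is a bounded weak solution of the perturbed equation $y'(t)=(A+P)y(t)$ (using that $u(t)\in R(A)$ for all $t$, so that by closedness of $R(A)$ one gets $P\int_0^t u(s)\,ds=0$), and then simply quotes part (i) for $A+P$; you instead split $\int_0^t y(s)\,ds$ into its $P$- and $(I-P)$-components, observe from \eqref{int} that $Py(t)=Py(0)$ (so the kernel component of the Ces\`aro mean is exactly $Py(0)$), and solve $(A+P)Z(t)=y(t)-y(0)$ for the range component $Z(t)$ via the bounded inverse $(A+P)^{-1}$ from the Preliminaries. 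Both routes rest on the same key fact, $(A+P)^{-1}\in L(X)$, and both end at the same identity $\frac1t\int_0^t y(s)\,ds-Py(0)=\frac1t(A+P)^{-1}[y(t)-y(0)]$; yours is marginally more direct in that it avoids re-verifying the weak-solution property for $u(\cdot)$ and the step that an integral of an $R(A)$-valued continuous function stays in the closed subspace $R(A)$, at the (innocuous) cost of explicitly invoking $P\in L(X)$ to commute $P$ with the integral — a boundedness that in any case already underlies the closed-graph argument for $(A+P)^{-1}$. Your domain checks ($Z(t)\in D(A)$, $AP=0$, $PA=0$ on $D(A)$, $PZ(t)=0$) are all in order, so there is no gap.
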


\begin{proof}\
\begin{enumerate}[label=(\roman*)]
\item Observe that the bounded inverse $A^{-1}$ need not be defined on the whole $X$ and let $y(\cdot)$ be an arbitrary bounded weak solution of equation \eqref{1}.

Then, for all $t\ge 0$, \eqref{int} holds, and hence,
\begin{equation*}
\dfrac{1}{t}\int_0^ty(s)\,ds=\dfrac{1}{t}A^{-1}[y(t)-y(0)],\ t>0,
\end{equation*}
which implies 
\begin{equation}\label{est2}
\biggl\|\dfrac{1}{t}\int_0^ty(s)\,ds\biggr\|\le\|A^{-1}\|\|y(t)-y(0)\|\dfrac{1}{t}
\le 2\|A^{-1}\|\sup_{s\ge 0}\|y(s)\|\dfrac{1}{t},\ t>0,
\end{equation}
where
\[
\|A^{-1}\|:=\sup_{g\in R(A),\ \|g\|=1}\|A^{-1}g\|.
\]

Whence,
\begin{equation*}
\left\|\dfrac{1}{t}\int_0^ty(s)\,ds\right\|
=\mathcal{O}\left(\dfrac{1}{t}\right),\ t\to\infty.
\end{equation*}
\item Let $y(\cdot)$ be an arbitrary bounded weak solution of equation \eqref{1}.

The vector function
\[u(t):=y(t)-Py(0),\ t\ge 0,\]
obtained by combining the bounded weak solution
$y(\cdot)$ of equation \eqref{1} with its \textit{equilibrium solution} $Py(0)$, as readily follows from the linear homogeneity of equation \eqref{1}, reflected in Definition \ref{def1}, is also a bounded weak solution of \eqref{1}, i.e.,
for all $t\ge 0$,
\begin{equation}\label{uws}
\int_0^t u(s)\,ds\in D(A)\ \text{and} \ u(t)=u(0)+A\int_0^t u(s)\,ds.
\end{equation}

Since, by decomposition \eqref{ri}, 
\[
u(0)=y(0)-Py(0)\in R(A),
\]
in view of \eqref{uws}, we infer that
\[u(t)\in R(A),\ t\ge 0,\] 
which, by the \textit{closedness} of $R(A)$, implies that
\begin{equation*}
\int_0^t u(s)\,ds\in D(A)\cap R(A),\ t\ge 0,
\end{equation*}
and hence,
\begin{equation}\label{null1}
P\int_0^t u(s)\,ds=0,\ t\ge 0.
\end{equation}

Now, let us show that $u(\cdot)$ is also a bounded weak solution of the evolution equation
\begin{equation}\label{2}
y'(t)=(A+P)y(t),\ t\ge 0.
\end{equation}

Indeed, by \eqref{uws},
\begin{equation*}
\int_0^t u(s)\,ds\in D(A)=D(A+P),\ t\ge 0,
\end{equation*}
and, considering \eqref{null1} 
\begin{equation*}
u(t)=u(0)+A\int_0^t u(s)\,ds=u(0)+(A+P)\int_0^t u(s)\,ds,\ t\ge 0.
\end{equation*}

The operator $A+P$ having a bounded inverse $(A+P)^{-1}\in L(X)$, according to proved part (i),
\begin{equation*}
\left\|\dfrac{1}{t}\int_0^ty(s)\,ds-Py(0)\right\|
=\left\|\dfrac{1}{t}\int_0^t u(s)\,ds\right\|
=\mathcal{O}\left(\dfrac{1}{t}\right),\ t\to\infty.
\end{equation*}
\end{enumerate}
\end{proof}

\subsection{Concluding Remarks}\label{cr}
\begin{enumerate}[label=\arabic*.]
\item Theorem \ref{Thm2}, proving the strong convergence at infinity
of the \textit{Ces\`{a}ro means}
\[
\dfrac{1}{t}\int_0^t y(s)\,ds
\]
for every bounded weak solution $y(\cdot)$ of equation \eqref{1}, also provides information about the speed of this convergence. 
\item In all, no part of Theorem \ref{Thm2} is more general than the other.

Indeed, in $X=l_2$ (the space of square-summable sequences), 
\begin{itemize}
\item the right-shift operator 
\begin{equation*}
A\{x_1,x_2,x_3,\dots\}:=\{0,x_1,x_2,\dots\},
\ \{x_1,x_2,x_3,\dots\}\in l_2,
\end{equation*}
has the bounded inverse
\begin{equation*}
A^{-1}\{0,y_2,y_3,\dots\}:=\{y_2,y_3,y_4,\dots\},
\ \{0,y_2,y_3,\dots\}\in R(A),
\end{equation*}
but, $R(A)$ being a proper closed subspace in $l_2$ and $\ker A=\{0\}$, decomposition \eqref{ri} does not hold;
\item the bounded linear operator
\begin{equation*}
A\{x_1,x_2,x_3,\dots\}:=\{x_1,0,x_3,0,\dots\},
\ \{x_1,x_2,x_3,\dots\}\in l_2,
\end{equation*}
is {\it reducibly invertible}, but has no inverse.
\end{itemize}
\item When $(X,\|\cdot\|)$ is a finite-dimensional space, part (ii) of Theorem \ref{Thm2} is, obviously, more general than part (i) and, the space being complex, is consistent with Theorem \ref{Thm1}, which follows from the fact that, in such a space, decomposition \eqref{ri} holds \textit{iff} $0$ is either a \textit{regular point} of $A$ or an \textit{eigenvalue}, whose {\it index} is equal to $1$, i.e., all Jordan blocks corresponding to $0$
are of size 1, and, in both cases, $P=P(0,A)$ \cite{Dun-SchI,Glazman-Lyubich}.
\end{enumerate}

\section{Reflexive Space}

\subsection{Main Statement}

Here, we assume the space $(X,\|\cdot\|)$ to be {\it reflexive}, resting our argument upon the characteristic property of such spaces that each bounded sequence of elements contains a weakly convergent subsequence (see, e.g., \cite{Dun-SchI}).

\begin{thm}\label{Thm3} 
Let $A$ be a closed linear operator in a (real or complex) reflexive Banach space $(X,\|\cdot\|)$.

\begin{enumerate}[label=(\roman*)]
\item If the operator $A$ has an inverse $A^{-1}$, 
for each bounded weak solution $y(\cdot)$ of equation \eqref{1},
\begin{equation*}
w\mbox{-}\!\!\lim_{t\to\infty}\dfrac{1}{t}\int_0^ty(s)\,ds=0.
\end{equation*}   
\item If the decomposition
\begin{equation}\label{decomp}
X=\ker A \oplus \overline{R(A)}
\end{equation} 
holds, for each bounded weak solution $y(\cdot)$ of equation \eqref{1},
\begin{equation*}
w\mbox{-}\!\!\lim_{t\to\infty}\dfrac{1}{t}\int_0^ty(s)\,ds=Py(0),
\end{equation*} 
where $P$ is the projection onto $\ker A$ along $\overline{R(A)}$.
\end{enumerate}
\end{thm}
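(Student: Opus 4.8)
The plan is to reduce both parts to Proposition \ref{part} by extracting a weakly convergent subsequence of the Cesàro means and identifying its limit. The crucial input is reflexivity: I will first show that the family $\left\{\frac{1}{t}\int_0^t y(s)\,ds\right\}_{t>0}$ is bounded in $X$, which is immediate since $\left\|\frac{1}{t}\int_0^t y(s)\,ds\right\|\le \sup_{s\ge 0}\|y(s)\|<\infty$ by the boundedness of the weak solution $y(\cdot)$. Hence, for any sequence $t_n\to\infty$, the reflexivity of $(X,\|\cdot\|)$ yields a subsequence along which the Cesàro means converge weakly to some limit, and Proposition \ref{part} forces that limit to lie in $\ker A$.

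For part (ii), I would first replace $y(\cdot)$ by $u(t):=y(t)-Py(0)$, which, exactly as in the proof of Theorem \ref{Thm2}(ii), is again a bounded weak solution of \eqref{1} with $u(0)=y(0)-Py(0)\in \overline{R(A)}$; the computation $\frac1t\int_0^t y(s)\,ds - Py(0) = \frac1t\int_0^t u(s)\,ds$ then reduces matters to showing the Cesàro means of $u(\cdot)$ tend weakly to $0$. By the same argument as above, every sequence $t_n\to\infty$ has a subsequence along which $\frac{1}{t_{n_k}}\int_0^{t_{n_k}} u(s)\,ds \rightharpoonup u_\infty$, and $u_\infty\in\ker A$ by Proposition \ref{part}. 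To conclude $u_\infty=0$ I need that $u_\infty\in\overline{R(A)}$ as well, for then $u_\infty\in\ker A\cap\overline{R(A)}=\{0\}$ by decomposition \eqref{decomp}. This follows because, as in Theorem \ref{Thm2}(ii), \eqref{int} gives $u(t)=u(0)+A\int_0^t u(s)\,ds\in \overline{R(A)}$ for all $t\ge0$ (using $u(0)\in\overline{R(A)}$ and $R(A)\subseteq\overline{R(A)}$ closed), hence each average $\frac1t\int_0^t u(s)\,ds$ lies in the closed subspace $\overline{R(A)}$, and weak limits of elements of a closed (hence weakly closed) subspace stay in it. Since every subsequential weak limit equals $0$ and the averages are bounded, a standard subsequence-of-subsequence argument gives $w\text{-}\lim_{t\to\infty}\frac1t\int_0^t u(s)\,ds=0$, proving (ii); part (i) is the special case $\ker A=\{0\}$, $\overline{R(A)}=X$ (an inverse $A^{-1}$ existing is equivalent to $\ker A=\{0\}$), where $P=0$.

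The main obstacle is the passage from "every subsequence has a further subsequence converging weakly to the same limit $\ell$" to "$w\text{-}\lim_{t\to\infty}$ of the net exists and equals $\ell$". For sequences this is the routine urysohn-type principle, but here the parameter $t$ ranges over $(0,\infty)$ continuously; I will handle this by noting that weak convergence of a bounded family is metrizable-enough in the sense that it suffices to test against each fixed $g^*\in X^*$, reducing the claim to the scalar statement that a bounded function $t\mapsto \langle \frac1t\int_0^t u(s)\,ds, g^*\rangle$ all of whose subsequential limits are $0$ must converge to $0$ — which is elementary. One minor technical point worth stating carefully is that Proposition \ref{part} is phrased for a fixed sequence $t_n\to\infty$ with a genuine weak limit, so I must apply it to each convergent subsequence separately rather than to the whole family at once; this is harmless but should be made explicit.
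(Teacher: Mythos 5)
Your argument is correct, and for part (i) it is essentially the paper's own proof: the Ces\`{a}ro means are bounded, reflexivity gives weak sequential compactness, Proposition \ref{part} places every subsequential weak limit in $\ker A=\{0\}$, and the subsequence-of-subsequence principle (tested against each fixed $g^*\in X^*$, equivalently along arbitrary sequences $t_n\to\infty$) upgrades this to the continuous-parameter weak limit. For part (ii) you take a genuinely different, and somewhat more direct, route. The paper shows that $u(t):=y(t)-Py(0)$ is a bounded weak solution of the auxiliary equation $y'(t)=(A+P)y(t)$ (via $P\int_0^t u(s)\,ds=0$), verifies that $A+P$ is injective, and then invokes its part (i) for that modified equation. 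You instead stay with equation \eqref{1} and identify each subsequential weak limit $u_\infty$ of the means of $u(\cdot)$ directly: Proposition \ref{part} puts $u_\infty$ in $\ker A$, while $u(t)\in\overline{R(A)}$ for all $t$, the closedness (hence weak closedness) of the subspace $\overline{R(A)}$, and the resulting membership of the averages in $\overline{R(A)}$ put $u_\infty$ in $\overline{R(A)}$, so $u_\infty\in\ker A\cap\overline{R(A)}=\{0\}$ by the directness of \eqref{decomp}. This avoids introducing $A+P$ and checking that $u(\cdot)$ solves the modified equation, at the cost of not reusing part (i) verbatim, which is what the paper's reduction (parallel to Theorem \ref{Thm2}(ii)) buys.

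One claim you should correct: part (i) is \emph{not} ``the special case $\ker A=\{0\}$, $\overline{R(A)}=X$'' of part (ii). The hypothesis of (i) is only the existence of $A^{-1}$, i.e., injectivity, and this does not imply decomposition \eqref{decomp}: the right-shift operator on $l_2$, mentioned in the paper's concluding remarks, is injective with a proper closed range, so $\ker A\oplus\overline{R(A)}\neq X$; indeed the paper notes that neither part of Theorem \ref{Thm3} is more general than the other. The slip is harmless for your proof, because the direct argument sketched in your first paragraph already establishes (i) with no condition on the range (every subsequential weak limit lies in $\ker A=\{0\}$); just present (i) that way rather than deriving it from (ii).
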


\begin{proof}\
\begin{enumerate}[label=(\roman*)]
\item Observe that the existence of the inverse operator $A^{-1}$ is equivalent to
\begin{equation}\label{ker}
\ker A=\{0\}
\end{equation}
and let $y(\cdot)$ be an arbitrary bounded weak solution of equation \eqref{1}. 

By the boundedness of $y(\cdot)$ and \eqref{int}, to an arbitrary sequence $\left\{t_n\right\}_{n=1}^\infty\subset (0,\infty)$ with $t_n\to \infty$, $n\to\infty$,
there corresponds a bounded sequence of elements
\begin{equation}\label{seq}
\left\{\dfrac{1}{t_n}\int_0^{t_n}y(s)\,ds\right\}_{n=1}^\infty\subset D(A).
\end{equation}

For any subsequence $\left\{t_{n(k)}\right\}_{k=1}^\infty$
of $\left\{t_n\right\}_{n=1}^\infty$, by the {\it reflexivity} $X$, the bounded subsequence
\begin{equation*}
\left\{\dfrac{1}{t_{n(k)}}\int_0^{t_{n(k)}}y(s)\,ds\right\}_{k=1}^\infty
\end{equation*}
of sequence \eqref{seq} contains a subsequence
\begin{equation*}
\left\{\dfrac{1}{t_{n(k(j))}}\int_0^{t_{n(k(j))}}y(s)\,ds\right\}_{j=1}^\infty
\end{equation*} 
such that
\begin{equation*}
w\mbox{-}\!\!\lim_{j\to\infty}\dfrac{1}{t_{n(k(j))}}\int_0^{t_{n(k(j))}}y(s)\,ds=y_\infty
\end{equation*}
for some $y_\infty\in X$.

By Proposition \ref{part},
\begin{equation*}
y_\infty\in \ker A,
\end{equation*}
and hence, by \eqref{ker}, $y_\infty=0$.

Thus, an arbitrary subsequence of sequence
\eqref{seq} contains a subsequence weakly convergent to $0$, which implies that
\begin{equation*}\label{weak2}
w\mbox{-}\!\!\lim_{n\to\infty}\dfrac{1}{t_n}\int_0^{t_n}y(s)\,ds=0.
\end{equation*}

Since, $\left\{t_n\right\}_{n=1}^\infty\subset (0,\infty)$ with $t_n\to \infty$, $n\to\infty$, is arbitrary, we conclude that
\begin{equation*}
w\mbox{-}\!\!\lim_{t\to\infty}\dfrac{1}{t}\int_0^{t}y(s)\,ds=0.
\end{equation*}
\item Decomposition \eqref{decomp} implies that the closed linear operator $A+P$ has an inverse $(A+P)^{-1}$. Indeed, suppose that, for some $f\in D(A)$, 
\[(A+P)f=0.\]
Then $Af=-Pf$, which, since the subspaces $\ker A$ and $\overline{R(A)}$ are \textit{disjoint}, implies 
\[Af=Pf=0.\]
Whence, we infer that $f\in \ker A$, and therefore, $f=Pf=0$, which proves the existence of $(A+P)^{-1}$.

Let $y(\cdot)$ be an arbitrary bounded weak solution of equation \eqref{1}.

The vector function
\[u(t):=y(t)-Py(0),\ t\ge 0,\]
obtained by combining the bounded weak solution
$y(\cdot)$ of equation \eqref{1} with its \textit{equilibrium solution} $Py(0)$, as readily follows from the linear homogeneity of equation \eqref{1}, reflected in Definition \ref{def1}, is also a bounded weak solution of \eqref{1}, i.e., for all $t\ge 0$,
\begin{equation}\label{uws2}
\int_0^t u(s)\,ds\in D(A)\ \text{and} \ u(t)=u(0)+A\int_0^t u(s)\,ds.
\end{equation}

Since, by \eqref{decomp}, $u(0)=y(0)-Py(0)\in \overline{R(A)}$, in view of \eqref{uws2}, we infer that
\[u(t)\in \overline{R(A)},\ t\ge 0,\] 
which, by the \textit{closedness} of $\overline{R(A)}$, implies that
\begin{equation*}
\int_0^t u(s)\,ds\in D(A)\cap \overline{R(A)},\ t\ge 0.
\end{equation*}
Hence,
\begin{equation}\label{null2}
P\int_0^t u(s)\,ds=0,\ t\ge 0.
\end{equation}

Now, let us show that $u(\cdot)$ is also a bounded weak solution of the evolution equation
\begin{equation}\label{3}
y'(t)=(A+P)y(t),\ t\ge 0.
\end{equation}

Indeed, by \eqref{uws2},
\begin{equation*}
\int_0^t u(s)\,ds\in D(A)=D(A+P),\ t\ge 0,
\end{equation*}
and, considering \eqref{null2} 
\begin{equation*}
u(t)=u(0)+A\int_0^t u(s)\,ds=u(0)+(A+P)\int_0^t u(s)\,ds,\ t\ge 0.
\end{equation*}

The operator $A+P$ having an inverse $(A+P)^{-1}$, according to proved part (i),
\begin{equation*}
w\mbox{-}\!\!\lim_{t\to\infty}\dfrac{1}{t}\int_0^t u(s)\,ds=0.
\end{equation*}
Whence,
\begin{equation*}
w\mbox{-}\!\!\lim_{t\to\infty}\dfrac{1}{t}\int_0^t y(s)\,ds=Py(0).
\end{equation*}
\end{enumerate}
\end{proof}

\subsection{Concluding Remarks}
\begin{enumerate}[label=\arabic*.]
\item Unlike in part (i) of Theorem \ref{Thm2}, in part (i) of Theorem \ref{Thm3}, the inverse operator $A^{-1}$ need not be bounded. 
\item In view of the \textit{reflexivity} of $l_2$, the examples given in the concluding remarks to the prior section also demonstrate that, in all, no part of Theorem \ref{Thm3} is more general than the other.
\item When $(X,\|\cdot\|)$ is a finite-dimensional space, part (ii) of Theorem \ref{Thm3} is, obviously, more general than part (i) and, the space being complex, the same argument as in the corresponding concluding remark of the prior section applies to explain the consistency of the former with Theorem \ref{Thm1}.
\end{enumerate}

\section{Scalar Type Spectral Operator}

\subsection{Preliminaries}\label{prelim1}

Henceforth, $A$ is a {\it scalar type spectral operator} in a complex Banach space $(X,\|\cdot\|)$ and $E_A(\cdot)$ be its {\it spectral measure} (the {\it resolution of the identity}), the operator's \textit{spectrum} $\sigma(A)$ being the {\it support} for the latter \cite{Survey58,Dun-SchIII}.

For the reader's convenience, we briefly outline here certain essential preliminaries regarding such operators.

Observe that, in a complex finite-dimensional space, 
the scalar type spectral operators are those linear operators on the space, for which there is an \textit{eigenbasis} (see, e.g., \cite{Survey58,Dun-SchIII}) and, in a complex Hilbert space, the scalar type spectral operators are precisely those that are similar to the {\it normal} ones \cite{Wermer}.

Associated with a scalar type spectral operator in a complex Banach space is the {\it Borel operational calculus} analogous to that for a \textit{normal operator} in a complex Hilbert space \cite{Survey58,Dun-SchII,Dun-SchIII,Plesner}, which assigns to any Borel measurable function $F:\sigma(A)\to \C$ a scalar type spectral operator
\begin{equation*}
F(A):=\int\limits_{\sigma(A)} F(\lambda)\,dE_A(\lambda)
\end{equation*}
defined as follows:
\[
F(A)f:=\lim_{n\to\infty}F_n(A)f,\ f\in D(F(A)),\
D(F(A)):=\left\{f\in X\middle| \lim_{n\to\infty}F_n(A)f\ \text{exists}\right\},
\]
where
\begin{equation*}
F_n(\cdot):=F(\cdot)\chi_{\{\lambda\in\sigma(A)\,|\,|F(\lambda)|\le n\}}(\cdot),
\ n\in\N,
\end{equation*}
($\chi_\delta(\cdot)$ is the {\it characteristic function} of a set $\delta\subseteq \C$, $\N:=\left\{1,2,3,\dots\right\}$ is the set of \textit{natural numbers}) and
\begin{equation*}
F_n(A):=\int\limits_{\sigma(A)} F_n(\lambda)\,dE_A(\lambda),\ n\in\N,
\end{equation*}
are {\it bounded} scalar type spectral operators on $X$ defined in the same manner as for a {\it normal operator} (see, e.g., \cite{Dun-SchII,Plesner}).

In particular,
\begin{equation*}
A^n=\int\limits_{\sigma(A)} \lambda^n\,dE_A(\lambda),\ n\in\Z_+,
\end{equation*}
($\Z_+:=\left\{0,1,2,\dots\right\}$ is the set of \textit{nonnegative integers}, $A^0:=I$) and
\begin{equation}\label{exp2}
e^{zA}:=\int\limits_{\sigma(A)} e^{z\lambda}\,dE_A(\lambda),\ z\in\C.
\end{equation}

The properties of the {\it spectral measure} and {\it operational calculus}, exhaustively delineated in \cite{Survey58,Dun-SchIII}, underlie the entire subsequent discourse. Here, we touch upon a few facts of particular importance.

Due to its {\it strong countable additivity}, the spectral measure $E_A(\cdot)$ is {\it bounded} \cite{Dun-SchI,Dun-SchIII}, i.e., there is such an $M\ge 1$ that, for any Borel set $\delta\subseteq \C$,
\begin{equation}\label{bounded}
\|E_A(\delta)\|\le M.
\end{equation}
Observe that the notation $\|\cdot\|$ is recycled here to designate the norm in the space $L(X)$ of all bounded linear operators on $X$. We adhere to this rather conventional economy of symbols in what follows adopting the same notation for the norm in the dual space $X^*$ as well (cf. \cite{Engel-Nagel,Markin2002(2)}). 

For any $f\in X$ and $g^*\in X^*$, the \textit{total variation} $v(f,g^*,\cdot)$ of the complex-valued Borel measure $\langle E_A(\cdot)f,g^* \rangle$ is a {\it finite} positive Borel measure with
\begin{equation}\label{tv}
v(f,g^*,\C)=v(f,g^*,\sigma(A))\le 4M\|f\|\|g^*\|
\end{equation}
(see, e.g., \cite{Markin2004(1),Markin2004(2)}).

Also (Ibid.), for a Borel measurable function $F:\C\to \C$, $f\in D(F(A))$, $g^*\in X^*$, and a Borel set $\delta\subseteq \C$,
\begin{equation}\label{cond(ii)}
\int\limits_\delta|F(\lambda)|\,dv(f,g^*,\lambda)
\le 4M\|E_A(\delta)F(A)f\|\|g^*\|.
\end{equation}
In particular, for $\delta=\sigma(A)$,
\begin{equation}\label{cond(i)}
\int\limits_{\sigma(A)}|F(\lambda)|\,d v(f,g^*,\lambda)\le 4M\|F(A)f\|\|g^*\|.
\end{equation}

Observe that the constant $M\ge 1$ in \eqref{tv}--\eqref{cond(i)} is from 
\eqref{bounded}.

Our principal result heavily relies on the following three key statements proved in \cite{Markin2002(1),Markin2002(2),Markin2006}. 

\begin{thm}[{\cite[Theorem $4.2$]{Markin2002(1)}}]\label{GWS}\ \\
Let $A$ be a scalar type spectral operator in a complex Banach space $(X,\|\cdot\|)$. A vector function $y:[0,\infty) \to X$ is a weak solution 
of equation \eqref{1} iff there is an $\displaystyle f \in \bigcap_{t\ge 0}D(e^{tA})$ such that
\begin{equation}\label{expf}
y(t)=e^{tA}f,\ t\ge 0,
\end{equation}
the operator exponentials understood in the sense of the Borel operational calculus (see \eqref{exp2}).
\end{thm}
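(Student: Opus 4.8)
The plan is to prove the two implications separately, exploiting the Borel operational calculus for the scalar type spectral operator $A$ and the characterization of weak solutions via Definition~\ref{def2}. For the ``if'' direction, suppose $f\in\bigcap_{t\ge 0}D(e^{tA})$ and set $y(t):=e^{tA}f$. First I would verify strong continuity of $y(\cdot)$ on $[0,\infty)$: fixing $t_0\ge 0$ and writing $e^{tA}f-e^{t_0A}f=\int_{\sigma(A)}(e^{t\lambda}-e^{t_0\lambda})\,dE_A(\lambda)f$, I would pass to the scalar measures $\langle E_A(\cdot)f,g^*\rangle$, use the total-variation bound \eqref{tv} together with \eqref{cond(i)} applied to the function $F(\lambda)=e^{t_0\lambda}$ (which controls the tails uniformly near $t_0$), and invoke dominated convergence for the finite measure $v(f,g^*,\cdot)$; taking the supremum over $\|g^*\|\le 1$ gives norm continuity. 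Then, for $g^*\in D(A^*)$, I would compute $\frac{d}{dt}\langle y(t),g^*\rangle$ by differentiating under the integral sign in $\langle e^{tA}f,g^*\rangle=\int_{\sigma(A)}e^{t\lambda}\,d\langle E_A(\lambda)f,g^*\rangle$, justifying the interchange via the dominated-convergence estimates above (the derivative of the integrand is $\lambda e^{t\lambda}$, and on $D(A^*)$ the measure $\langle E_A(\cdot)f,A^*g^*\rangle$ equals $\lambda\,d\langle E_A(\lambda)f,g^*\rangle$ up to the usual adjoint identity), obtaining $\langle y(t),A^*g^*\rangle$ as required by Definition~\ref{def2}.

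For the ``only if'' direction, let $y(\cdot)$ be a weak solution in the sense of Definition~\ref{def1}/\ref{def2} and put $f:=y(0)$. The key structural fact I would use is the spectral-subspace localization: for each bounded Borel set $\delta\subseteq\sigma(A)$, the restriction $A_\delta:=A|_{E_A(\delta)X}$ is a bounded scalar type spectral operator on the invariant subspace $E_A(\delta)X$, and $E_A(\delta)y(\cdot)$ is a bounded weak solution of $y'=A_\delta y$ there. Since $A_\delta$ is bounded, the associated Cauchy problem is well-posed with semigroup $e^{tA_\delta}$, so by the remarks in the Preliminaries (Proposition II.6.4 of \cite{Engel-Nagel}, or \cite{Ball}) the weak solution is unique and equals $E_A(\delta)y(t)=e^{tA_\delta}E_A(\delta)f=E_A(\delta)e^{tA}f$ for all $t\ge 0$ whenever $E_A(\delta)f\in D(e^{tA})$ — which is automatic on a bounded spectral set. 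Letting $\delta_n:=\{\lambda\in\sigma(A)\mid |\lambda|\le n\}$ and using $E_A(\delta_n)\to I$ strongly, I would conclude $E_A(\delta_n)y(t)\to y(t)$; to identify the limit with $e^{tA}f$ I must show $f\in D(e^{tA})$, i.e.\ that $\lim_n (e^{tA})_n f=\lim_n e^{tA}E_A(\delta_n)f$ exists, which holds precisely because $e^{tA}E_A(\delta_n)f=E_A(\delta_n)y(t)$ converges (to $y(t)$). This simultaneously gives $f\in\bigcap_{t\ge 0}D(e^{tA})$ and $y(t)=e^{tA}f$.

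The main obstacle is the ``only if'' direction — specifically, bootstrapping from the finite-spectral-set case to membership $f\in D(e^{tA})$ for all $t$. The delicate point is that $D(e^{tA})$ shrinks as $t$ grows, and a priori a weak solution need not take values in any dense domain; the argument must therefore run entirely through the approximants $E_A(\delta_n)y(t)$ and the definition of $D(e^{tA})$ as the set where $\lim_n (e^{tA})_n f$ exists, using the boundedness \eqref{bounded} of the spectral measure to keep $\|E_A(\delta_n)y(t)\|\le M\sup_{s}\|y(s)\|$ under control. A secondary technical point is justifying the differentiation under the integral sign and the Fubini-type manipulations in the ``if'' direction, for which the estimates \eqref{tv}--\eqref{cond(i)} furnish exactly the integrable majorants needed. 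I expect the verification that the two notions of ``solution'' match on each $E_A(\delta_n)X$ — reducing to the bounded-operator case where everything is classical — to be the conceptual crux, with the rest being careful but routine measure-theoretic bookkeeping.
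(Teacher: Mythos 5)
First, a point of comparison: the paper does not prove Theorem \ref{GWS} at all---it is imported verbatim from \cite[Theorem 4.2]{Markin2002(1)}---so your proposal can only be measured against the argument of that cited source, which likewise proceeds by reduction to bounded spectral pieces together with operational-calculus estimates of the type \eqref{tv}--\eqref{cond(i)}. In that sense your route is essentially the expected one, and its skeleton (restriction to $E_A(\delta_n)X$ with $\delta_n:=\{\lambda\in\sigma(A):|\lambda|\le n\}$, uniqueness of mild solutions for the bounded restriction $A|_{E_A(\delta_n)X}$, then passage to the limit; and, conversely, verification of Definition \ref{def2}, which is legitimate since a scalar type spectral operator is densely defined) is sound.

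Two places need repair. In the ``only if'' direction you assert that $f\in D(e^{tA})$ means ``$\lim_n(e^{tA})_nf=\lim_n e^{tA}E_A(\delta_n)f$ exists''; that identification is not the definition recalled in the Preliminaries, where the truncations are taken over $\{\lambda\in\sigma(A):|e^{t\lambda}|\le n\}$, a different exhausting family, so the existence of your limit does not literally certify membership in $D(e^{tA})$. The fix is standard and should be stated: $e^{tA}$, as an operator of the Borel operational calculus, is \emph{closed} (see \cite[Theorem XVIII.2.11]{Dun-SchIII}); since $E_A(\delta_n)f\to f$ and $e^{tA}E_A(\delta_n)f=E_A(\delta_n)y(t)\to y(t)$ by strong countable additivity, closedness yields $f\in D(e^{tA})$ and $e^{tA}f=y(t)$ for each $t\ge 0$. (Also drop ``bounded'': the theorem characterizes \emph{all} weak solutions, and nowhere do you need $\sup_{s\ge 0}\|y(s)\|<\infty$, only $E_A(\delta_n)y(t)\to y(t)$ for each fixed $t$.) In the ``if'' direction, dominated convergence for each fixed $g^*$ followed by ``taking the supremum over $\|g^*\|\le 1$'' is not a legitimate interchange of limit and supremum; to get strong continuity you must split $\sigma(A)$ into a bounded piece, on which $e^{t\lambda}\to e^{t_0\lambda}$ uniformly in $\lambda$ and \eqref{tv} applies, and a tail $\sigma(A)\setminus\delta_n$, on which \eqref{cond(ii)} with $F(\lambda)\equiv 1$ and $F(\lambda)=e^{T\lambda}$ gives bounds of the form $4M\bigl(\|E_A(\sigma(A)\setminus\delta_n)f\|+\|E_A(\sigma(A)\setminus\delta_n)e^{TA}f\|\bigr)$, uniform in $\|g^*\|=1$ and in $t\in[0,T]$, tending to $0$ as $n\to\infty$. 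Similarly, differentiating $\langle e^{tA}f,g^*\rangle$ under the integral sign for $g^*\in D(A^*)$ requires the majorant condition $\int_{\sigma(A)}|\lambda|e^{s\Rep\lambda}\,dv(f,g^*,\lambda)<\infty$ for all $s\ge 0$; this is true but deserves proof---it follows because $\langle E_A(\cdot)\,e^{sA}f,A^*g^*\rangle$ is a complex, hence finite-variation, measure whose variation over bounded Borel sets is exactly $\int|\lambda|e^{s\Rep\lambda}\,dv(f,g^*,\lambda)$ over those sets---and your appeal to ``the usual adjoint identity'' should be made precise in this form. With these repairs your argument goes through.
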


\begin{prop}[{\cite[Proposition $3.1$]{Markin2002(2)}}]\label{SG}\ \\
A scalar type spectral operator $A$ in a complex Banach space $(X,\|\cdot\|)$ generates a $C_0$-semigroup of bounded linear operators iff there is an $\omega\in\R$
such that
\[
\sigma(A)\subseteq \left\{\lambda\in\C\,\middle|\, \Rep\lambda\le \omega\right\},
\] 
in which case the semigroup that of
the operator exponentials $\left\{e^{tA}\right\}_{t\ge 0}$.
\end{prop}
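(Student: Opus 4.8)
The plan is to prove the two implications separately; essentially all the work is in the sufficiency direction, which will simultaneously establish the concluding ``in which case'' clause.

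Assume first that $\sigma(A)\subseteq\{\lambda\in\C\mid\Rep\lambda\le\omega\}$ for some $\omega\in\R$. For each $t\ge 0$ the function $\lambda\mapsto e^{t\lambda}$ is bounded on $\sigma(A)$, with $|e^{t\lambda}|\le e^{t\omega}$ there, so by the Borel operational calculus $e^{tA}=\int_{\sigma(A)}e^{t\lambda}\,dE_A(\lambda)\in L(X)$; pairing $e^{tA}f$ against $g^*\in X^*$ and invoking \eqref{tv} gives $\|e^{tA}\|\le 4Me^{t\omega}$, a bound uniform for $t$ in compact subintervals of $[0,\infty)$. The identities $e^{0A}=I$ and $e^{sA}e^{tA}=e^{(s+t)A}$ follow from the multiplicativity of the operational calculus on bounded Borel functions. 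It remains to verify strong continuity and to identify the generator of $\{e^{tA}\}_{t\ge 0}$.

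For strong continuity, put $X_n:=E_A(\sigma(A)\cap\{|\lambda|\le n\})X$. On $X_n$ the operator $A$ agrees with the bounded operator $A_n:=\int_{\sigma(A)\cap\{|\lambda|\le n\}}\lambda\,dE_A(\lambda)$ and $e^{tA}$ agrees with $e^{tA_n}=\sum_{k\ge 0}(t^k/k!)A_n^k$, which is norm-continuous in $t\ge 0$. By the strong countable additivity of $E_A$, $E_A(\sigma(A)\cap\{|\lambda|\le n\})\to I$ strongly as $n\to\infty$, so $\bigcup_{n\in\N}X_n$ is dense in $X$; together with the uniform bound $\sup_{0\le t\le 1}\|e^{tA}\|<\infty$ this gives $\lim_{t\to 0+}e^{tA}f=f$ for all $f\in X$, hence $\{e^{tA}\}_{t\ge 0}$ is a $C_0$-semigroup. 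To see that its generator $B$ equals $A$, I would compare resolvents: fix $\mu$ with $\Rep\mu>\omega$, so that $(\mu-\lambda)^{-1}$ is bounded on $\sigma(A)$ and $(\mu I-A)^{-1}=\int_{\sigma(A)}(\mu-\lambda)^{-1}\,dE_A(\lambda)$. General $C_0$-semigroup theory yields $(\mu I-B)^{-1}=\int_0^\infty e^{-\mu t}e^{tA}\,dt$, an absolutely convergent $L(X)$-valued integral by the above bound. Pairing against $g^*\in X^*$, using $|e^{-\mu t}e^{t\lambda}|\le e^{-t(\Rep\mu-\omega)}$ and the finiteness of $v(f,g^*,\cdot)$ from \eqref{tv} to apply Fubini's theorem, and evaluating $\int_0^\infty e^{-\mu t}e^{t\lambda}\,dt=(\mu-\lambda)^{-1}$, one obtains $\langle(\mu I-B)^{-1}f,g^*\rangle=\int_{\sigma(A)}(\mu-\lambda)^{-1}\,d\langle E_A(\lambda)f,g^*\rangle=\langle(\mu I-A)^{-1}f,g^*\rangle$. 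As $f,g^*$ are arbitrary, $(\mu I-B)^{-1}=(\mu I-A)^{-1}$, whence $B=A$. This proves the sufficiency and shows that the semigroup generated by $A$ is $\{e^{tA}\}_{t\ge 0}$.

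For the necessity, if $A$ generates a $C_0$-semigroup then the Generation Theorem \cite[Theorem II.3.8]{Engel-Nagel} furnishes $\omega\in\R$ with $\{\lambda\in\C\mid\Rep\lambda>\omega\}\subseteq\rho(A)$, i.e. $\sigma(A)\subseteq\{\lambda\in\C\mid\Rep\lambda\le\omega\}$; this uses nothing about the spectral structure of $A$, and the uniqueness of the $C_0$-semigroup with generator $A$ together with the sufficiency part then identifies that semigroup as $\{e^{tA}\}_{t\ge 0}$. I expect the only real difficulties to lie in the sufficiency direction: confirming $e^{tA}\in L(X)$ with a locally uniform norm bound, and justifying the interchange of the Laplace and spectral integrals. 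Neither requires anything beyond the estimates \eqref{tv}--\eqref{cond(i)} and the strong countable additivity of $E_A$, but the domain bookkeeping for the (generally unbounded) operator exponentials calls for care.
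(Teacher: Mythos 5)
Your argument is correct, but note that this paper does not prove Proposition \ref{SG} at all: it is imported verbatim from \cite[Proposition 3.1]{Markin2002(2)} and used as a black box, so there is no in-paper proof to measure you against. What you wrote is essentially the standard proof along the lines of the cited source (cf.\ also \cite{Panchapagesan1969}): the half-plane condition makes $\lambda\mapsto e^{t\lambda}$ bounded on $\sigma(A)$, so $e^{tA}\in L(X)$ with $\|e^{tA}\|\le 4Me^{t\omega}$ via the total-variation estimate \eqref{tv}; the semigroup law comes from multiplicativity of the Borel calculus on bounded functions; strong continuity follows from norm continuity on the spectral subspaces $E_A(\sigma(A)\cap\{|\lambda|\le n\})X$, their density (strong countable additivity), and the locally uniform bound; the generator is identified by comparing $(\mu I-B)^{-1}=\int_0^\infty e^{-\mu t}e^{tA}\,dt$ with $(\mu I-A)^{-1}=\int_{\sigma(A)}(\mu-\lambda)^{-1}\,dE_A(\lambda)$ for $\Rep\mu>\omega$ via Fubini; and necessity is the Generation Theorem. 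Two small points you rely on and should cite explicitly: that $D(F(A))=X$ with $F(A)$ bounded whenever $F$ is bounded on $\sigma(A)$ (immediate from the truncation definition, since $F_n=F$ for large $n$), and that the resolvent of a scalar type spectral operator at $\mu\in\rho(A)$ is indeed given by the operational calculus applied to $(\mu-\lambda)^{-1}$ (\cite[Theorem XVIII.2.11]{Dun-SchIII}); with those references supplied, your proof is complete and self-contained.
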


\begin{thm}[{\cite[Theorem]{Markin2006}}]\label{DSD}\ \\
For a scalar type spectral operator $A$ in a complex Banach space $(X,\|\cdot\|)$ with spectral measure $E_A(\cdot)$, the \textit{direct sum decomposition}
\begin{equation*}
X=\ker A\oplus \overline{R(A)}
\end{equation*}
holds with 
\[
\ker A=E_A(\{0\})X\quad \text{and}\quad \overline{R(A)}=E_A(\sigma(A)\setminus\{0\})X.
\]
\end{thm}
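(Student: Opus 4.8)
The plan is to read the decomposition directly off the two complementary spectral projections attached to the Borel sets $\{0\}$ and $\sigma(A)\setminus\{0\}$, and then to identify their ranges with $\ker A$ and $\overline{R(A)}$ by means of the Borel operational calculus together with the total-variation estimate \eqref{cond(ii)}. Setting $P_0:=E_A(\{0\})$ and $P_1:=E_A(\sigma(A)\setminus\{0\})$, the multiplicativity and strong countable additivity of the spectral measure give $P_0P_1=P_1P_0=E_A(\emptyset)=0$ and $P_0+P_1=E_A(\sigma(A))=I$, so that $X=P_0X\oplus P_1X$ is a direct sum of the closed subspaces $P_0X$ and $P_1X$ (ranges of bounded projections). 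If $0\notin\sigma(A)$ then $P_0=0$, $\ker A=\{0\}$, and nothing further is needed, so I would assume $0\in\sigma(A)$. It then remains to prove $\ker A=P_0X$ and $\overline{R(A)}=P_1X$.

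For the equality $\ker A=P_0X$, the inclusion $P_0X\subseteq\ker A$ is immediate: since $\lambda\chi_{\{0\}}(\lambda)\equiv 0$, the composition rule forces $E_A(\{0\})X\subseteq D(A)$ and $AE_A(\{0\})=0$, so $P_0f=f$ implies $f\in D(A)$ and $Af=0$. For the reverse inclusion, given $f\in\ker A$ I would apply \eqref{cond(ii)} with $F(\lambda)=\lambda$ and $\delta=\delta_n:=\{\lambda\in\sigma(A)\,|\,|\lambda|\ge 1/n\}$: as $E_A(\delta_n)Af=0$, the right-hand side vanishes, so $\frac1n\,v(f,g^*,\delta_n)\le\int_{\delta_n}|\lambda|\,dv(f,g^*,\lambda)=0$, whence $v(f,g^*,\delta_n)=0$ for every $g^*\in X^*$ and every $n$. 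Letting $n\to\infty$ (so $\delta_n\uparrow\sigma(A)\setminus\{0\}$) and using $|\langle E_A(\delta_n)f,g^*\rangle|\le v(f,g^*,\delta_n)$ yields $\langle P_1f,g^*\rangle=0$ for all $g^*$, i.e. $P_1f=0$, so $f=P_0f\in P_0X$.

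For the equality $\overline{R(A)}=P_1X$, one inclusion follows from $E_A(\{0\})Af=(\lambda\chi_{\{0\}})(A)f=0$ for every $f\in D(A)$, giving $R(A)\subseteq P_1X$ and hence, $P_1X$ being closed, $\overline{R(A)}\subseteq P_1X$. For the reverse inclusion I would fix $f\in P_1X$ and, with $\delta_n:=\{\lambda\in\sigma(A)\,|\,1/n\le|\lambda|\le n\}$, put $f_n:=E_A(\delta_n)f$ and $g_n:=\bigl(\lambda^{-1}\chi_{\delta_n}(\lambda)\bigr)(A)f$, the latter well defined since $\lambda^{-1}\chi_{\delta_n}$ is a bounded Borel function. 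Because $\delta_n$ is a bounded set, $AE_A(\delta_n)\in L(X)$, so $g_n\in E_A(\delta_n)X\subseteq D(A)$, and the composition rule of the operational calculus gives $Ag_n=\bigl(\lambda\cdot\lambda^{-1}\chi_{\delta_n}\bigr)(A)f=E_A(\delta_n)f=f_n\in R(A)$. Finally, strong countable additivity of $E_A(\cdot)$ with $\delta_n\uparrow\sigma(A)\setminus\{0\}$ and $P_1f=f$ gives $f_n\to f$, so $f\in\overline{R(A)}$; combining the two inclusions finishes the proof.

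The step I expect to be the main obstacle is the last one: rigorously justifying $g_n\in D(A)$ and $Ag_n=f_n$, which rests on the composition/multiplicativity identities of the (generally unbounded) Borel operational calculus and on the boundedness of $AE_A(\delta)$ for bounded Borel $\delta$. By contrast, the measure-theoretic squeeze in the $\ker A$ computation — passing from $v(f,g^*,\delta_n)=0$ to $P_1f=0$ — is routine once \eqref{cond(ii)} has been invoked.
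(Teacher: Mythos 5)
Your proof is correct; every step checks out, including the ones you flagged as delicate (for bounded Borel $\delta$ one has $E_A(\delta)X\subseteq D(A)$ and $AE_A(\delta)=\int_\delta\lambda\,dE_A(\lambda)\in L(X)$, and the multiplicativity of the bounded Borel calculus gives $Ag_n=f_n$; these are exactly the properties in {\cite[Theorem XVIII.2.11]{Dun-SchIII}}). Note, though, that the paper itself contains no proof of Theorem \ref{DSD}: it is quoted as an external result from \cite{Markin2006}, so there is nothing in-paper to compare against; your argument — decomposing $X$ by the complementary projections $E_A(\{0\})$ and $E_A(\sigma(A)\setminus\{0\})$, identifying $\ker A=E_A(\{0\})X$ via \eqref{cond(ii)} with $F(\lambda)=\lambda$, and exhausting $E_A(\sigma(A)\setminus\{0\})X$ by the vectors $AE_A(\{1/n\le|\lambda|\le n\})g_n\to f$ — is the standard spectral-measure route and is essentially the argument carried out in that cited note. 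A minor simplification: in the $\ker A\subseteq E_A(\{0\})X$ step you could avoid \eqref{cond(ii)} by noting that $A$ restricted to $E_A(\{\lambda\in\sigma(A)\,|\,|\lambda|\ge 1/n\})X$ has spectrum bounded away from $0$, hence is invertible there, so $AE_A(\delta_n)f=E_A(\delta_n)Af=0$ already forces $E_A(\delta_n)f=0$.
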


\subsection{Main Results}

\begin{thm}\label{Thm4} 
If $A$ is a scalar type spectral operator in a complex Banach space $(X,\|\cdot\|)$ with spectral measure $E_A(\cdot)$, for each bounded weak solution $y(\cdot)$ of equation \eqref{1},
\begin{equation*}
s\mbox{-}\!\!\lim_{t\to\infty}\dfrac{1}{t}\int_0^t y(s)\,ds=Py(0),
\end{equation*}
where $P=E_A(\{0\})$ is the projection onto $\ker A$ along $\overline{R(A)}$.

The spectrum of $A$ containing no pure imaginary values,
\begin{equation*}
s\mbox{-}\!\!\lim_{t\to\infty}y(t)=Py(0).
\end{equation*}
\end{thm}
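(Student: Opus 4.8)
The plan is to reduce to a spectral integral computation via Theorem~\ref{GWS}, then split the spectrum into the point $0$, the bounded part away from $0$, and the unbounded part, controlling each piece by the estimates \eqref{tv}--\eqref{cond(i)} together with a dominated-convergence argument applied to the scalar measures $v(y(0),g^*,\cdot)$. First I would invoke Theorem~\ref{GWS} to write $y(t)=e^{tA}f$ with $f=y(0)\in\bigcap_{t\ge0}D(e^{tA})$, so that for each $g^*\in X^*$ and $t>0$,
\begin{equation*}
\Bigl\langle \dfrac{1}{t}\int_0^t y(s)\,ds,\,g^*\Bigr\rangle
=\int\limits_{\sigma(A)}\dfrac{1}{t}\int_0^t e^{s\lambda}\,ds\;d\langle E_A(\lambda)f,g^*\rangle
=\int\limits_{\sigma(A)}\varphi_t(\lambda)\,d\langle E_A(\lambda)f,g^*\rangle,
\end{equation*}
where $\varphi_t(\lambda):=\frac{1}{t}\int_0^t e^{s\lambda}\,ds$ equals $1$ for $\lambda=0$ and $\dfrac{e^{t\lambda}-1}{t\lambda}$ otherwise; interchanging the time integral and the spectral integral is justified by \eqref{tv}. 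The pointwise limit as $t\to\infty$ of $\varphi_t(\lambda)$ is $1$ at $\lambda=0$ and $0$ for every $\lambda$ with $\Rep\lambda\le 0$, $\lambda\ne 0$.

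The key structural input is that boundedness of $y(\cdot)$ forces $\sigma(A)$ to be essentially confined to the closed left half-plane in a quantitative sense: more precisely, I would show $\|E_A(\{\Rep\lambda>0\})f\|$ and, for each $\eps>0$, $\|E_A(\{\Rep\lambda\ge\eps\})f\|$ behave so that the corresponding portion of the integral is negligible — indeed from $\|y(t)\|=\|e^{tA}f\|\le C$ and \eqref{cond(ii)} one controls $\int_{\{\Rep\lambda\ge\eps\}}|e^{t\lambda}|\,dv(f,g^*,\lambda)\le 4M\|E_A(\{\Rep\lambda\ge\eps\})e^{tA}f\|\|g^*\|\le 4MC\|g^*\|$, and letting the exponential weight $e^{t\eps}\to\infty$ forces $v(f,g^*,\{\Rep\lambda\ge\eps\})=0$; since $\eps>0$ and $g^*$ are arbitrary, $v(f,g^*,\{\Rep\lambda>0\})=0$, i.e. the measure $\langle E_A(\cdot)f,g^*\rangle$ is carried by $\{\Rep\lambda\le 0\}$. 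Then $|\varphi_t(\lambda)|\le 1$ on the half-plane $\{\Rep\lambda\le 0\}$ (for $t>0$), the finite measure $v(f,g^*,\cdot)$ from \eqref{tv} serves as the dominating majorant, and the Dominated Convergence Theorem yields
\begin{equation*}
\lim_{t\to\infty}\Bigl\langle \dfrac{1}{t}\int_0^t y(s)\,ds,\,g^*\Bigr\rangle
=\int\limits_{\{0\}}1\;d\langle E_A(\lambda)f,g^*\rangle
=\langle E_A(\{0\})f,\,g^*\rangle=\langle Pf,g^*\rangle,
\end{equation*}
which gives the \emph{weak} convergence $w\text{-}\lim_{t\to\infty}\frac1t\int_0^t y(s)\,ds=Py(0)$ with $P=E_A(\{0\})$ (whose range is $\ker A$ along $\overline{R(A)}$ by Theorem~\ref{DSD}).

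To upgrade to \emph{strong} convergence I would estimate $\bigl\|\frac1t\int_0^t y(s)\,ds-Pf\bigr\|$ directly. Writing $u(t):=y(t)-Pf$, which is again a bounded weak solution with $E_A(\{0\})u(0)=0$ by the idempotence of $E_A(\{0\})$, I reduce to showing $s\text{-}\lim_{t\to\infty}\frac1t\int_0^t u(s)\,ds=0$ when $u(0)\in\overline{R(A)}=E_A(\sigma(A)\setminus\{0\})X$. Here one uses \eqref{cond(i)}-type bounds: $\bigl\|\frac1t\int_0^t u(s)\,ds\bigr\|=\bigl\|\psi_t(A)u(0)\bigr\|$ where $\psi_t(\lambda)=\varphi_t(\lambda)\chi_{\sigma(A)\setminus\{0\}}(\lambda)\to 0$ pointwise and stays bounded by $1$ on the relevant support $\{\Rep\lambda\le 0,\ \lambda\ne 0\}$; the operator norm estimate $\|\psi_t(A)u(0)\|\le 4M\sup_\lambda|\psi_t(\lambda)|\,\cdot(\dots)$ is too crude, so instead I split $\sigma(A)\setminus\{0\}$ into $\{|\lambda|\le\delta\}\setminus\{0\}$, $\{\delta<|\lambda|\le R,\ \Rep\lambda\le 0\}$, and $\{|\lambda|>R\}$, bound the first by choosing $\delta$ small (using $|\varphi_t(\lambda)-0|$ small uniformly in a neighborhood of a \emph{nonzero} imaginary axis point is false, so one actually needs $|\varphi_t|\le 2/(t|\lambda|)$ off a neighborhood of $0$), the third by $\|E_A(\{|\lambda|>R\})u(0)\|\to 0$ as $R\to\infty$ together with $|\varphi_t|\le 1$, and the middle compact piece by $|\varphi_t(\lambda)|\le \frac{2}{t\delta}\to 0$ uniformly. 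The spectral-measure boundedness \eqref{bounded} converts these pointwise bounds into operator bounds on the pieces $E_A(\cdot)u(0)$. The second claim — convergence of $y(t)$ itself when $\sigma(A)\cap i\R=\varnothing$ — follows the same way: now the measure is carried by $\{\Rep\lambda<0\}$, where additionally $\|E_A(\{-\eta\le\Rep\lambda<0\})u(0)\|$ must be handled, but the tail estimate $|e^{t\lambda}|=e^{t\Rep\lambda}\le 1$ combined with $\sup$ over the compact part going to $0$ gives $\|y(t)\|\to\|Pf\|$ component-wise.

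\textbf{Main obstacle.} The delicate point is the strong-convergence upgrade near the spectrum on the imaginary axis minus the origin: the functions $\varphi_t$ converge to $0$ there only \emph{pointwise}, not uniformly on any neighborhood, so a naive $\sup$-norm bound fails, and one must genuinely exploit the finiteness of the spectral projection $\|E_A(\{\delta<|\lambda|\}\cap i\R\,)u(0)\|$ and the decay $|\varphi_t(\lambda)|\le 2/(t|\lambda|)$ away from $0$, partitioning the imaginary axis by annuli $\{\delta\le|\lambda|\le R\}$ and sending $t\to\infty$ before $\delta\to 0$, $R\to\infty$. Establishing that this $\eps$–$\delta$–$R$ bookkeeping closes — i.e. that the three pieces are simultaneously controllable — is the crux; everything else is a routine application of \eqref{tv}--\eqref{cond(i)} and Theorems~\ref{GWS} and~\ref{DSD}.
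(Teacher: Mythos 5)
Your proposal is correct in substance, but it takes a genuinely different route from the paper for the main (Ces\`aro) assertion. Both arguments begin identically: represent $y(t)=e^{tA}f$ via Theorem \ref{GWS} and show that boundedness kills the right half-plane part of $f$ (the paper proves $E_A(\{\Rep\lambda>0\})f=0$ by contradiction through Hahn--Banach and \eqref{cond(i)}; your variation-measure argument with \eqref{cond(ii)} gives the same fact). From there the paper does \emph{not} estimate the Ces\`aro means directly: it introduces the auxiliary operator $A_-:=AE_A(\{\Rep\lambda\le 0\})$, notes $\sigma(A_-)$ lies in the closed left half-plane so that $A_-$ generates the \emph{bounded} $C_0$-semigroup $\{e^{tA_-}\}_{t\ge0}$ (Proposition \ref{SG}), observes $y(t)=e^{tA_-}f$, and then invokes the mean ergodic theorem for bounded semigroups (Goldstein--Radin--Showalter) together with the decomposition of Theorem \ref{DSD}, identifying the limit projection $E_{A_-}(\{0\})=E_A(\{0\})+E_A(\{\Rep\lambda>0\})$ via Bade's composition theorem. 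You instead prove strong convergence of the means by hand: subtract the equilibrium $Pf$, write the means of $u(t)=y(t)-Pf$ as spectral integrals of $\varphi_t(\lambda)=\frac1t\int_0^te^{s\lambda}ds$ against $E_A(\cdot)u(0)$, and split the (left half-plane) spectrum. Your bookkeeping does close, and in fact more simply than you fear: the only delicate region is a punctured neighborhood of $0$ (your claim that uniformity fails near a \emph{nonzero} imaginary point is not right --- on $\{|\lambda|\ge\delta,\ \Rep\lambda\le0\}$ one has $|\varphi_t(\lambda)|\le 2/(t\delta)$ uniformly, so the outer radius $R$ is not needed), while on $\{0<|\lambda|\le\delta\}$ one uses $|\varphi_t|\le1$, the bound $\int_\delta 1\,dv(u(0),g^*,\lambda)\le 4M\|E_A(\delta)u(0)\|\|g^*\|$ from \eqref{cond(ii)}, and the strong countable additivity of $E_A(\cdot)$ together with $E_A(\{0\})u(0)=0$ to make $\|E_A(\{0<|\lambda|\le\delta\})u(0)\|$ small --- exactly the mechanism the paper itself uses (for the strip $\{-1/n<\Rep\lambda<0\}$) in establishing \eqref{neg}, and your treatment of the second assertion (no imaginary spectrum) coincides with the paper's. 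What each approach buys: the paper's reduction is shorter and places the result squarely within the ergodic theory of bounded $C_0$-semigroups (with the mean ergodic projection appearing for structural reasons), at the cost of citing Goldstein--Radin--Showalter and Bade's theorem; your direct operational-calculus estimate is self-contained, avoids both citations, and makes the role of the spectral measure's strong continuity near $0$ and near the imaginary axis explicit.
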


\begin{proof}\quad
Let $y(\cdot)$ be an arbitrary \textit{bounded} weak solution of equation \eqref{1}. Then, by Theorem \ref{GWS},
\begin{equation*}
y(t)=e^{tA}f,\ t\ge 0,
\end{equation*}
with some $y(0)=f\in\bigcap\limits_{t\ge 0}D(e^{tA})$.

Let us show that the boundedness of $y(\cdot)$ necessarily implies that
\begin{equation}\label{zero}
E_A(\{\lambda\in\sigma(A)|\Rep\lambda>0\})f=0.
\end{equation}

Indeed, by the strong continuity of the {\it spectral measure}, the opposite indicates that there is an $n\in \N$ such that 
\begin{equation*}
E_A(\{\lambda\in\sigma(A)|\Rep\lambda\ge 1/n\})f\neq 0,
\end{equation*}
and hence, as follows from the {\it Hahn-Banach Theorem}, there is a $g^*\in X^*\setminus\{0\}$ such that
\begin{equation}\label{nonzero}
\langle E_A(\{\lambda\in\sigma(A)|\Rep\lambda\ge 1/n\})f,g^*\rangle \neq 0.
\end{equation} 

For any $t\ge 0$,
\begin{multline*}
\|y(t)\|=\|e^{tA}f\|
\hfill
\text{by \eqref{cond(i)};}
\\
\shoveleft{
\ge \left[4M\|g^*\|\right]^{-1}\int\limits_{\sigma(A)}e^{t\Rep\lambda}\,dv(f,g^*,\lambda)
}\\
\shoveleft{
\ge \left[4M\|g^*\|\right]^{-1}
\int\limits_{\{\lambda\in\sigma(A)|\Rep\lambda\ge 1/n\}}e^{t\Rep\lambda}\,dv(f,g^*,\lambda)
}\\
\shoveleft{
\ge 
\left[4M\|g^*\|\right]^{-1}e^{t/n}v(f,g^*,\{\lambda\in\sigma(A)|\Rep\lambda\ge 1/n\})
}\\
\shoveleft{
\ge 
\left[4M\|g^*\|\right]^{-1}
e^{t/n}|\langle E_A(\{\lambda\in\sigma(A)|\Rep\lambda\ge 1/n\})f,g^*\rangle|
\hfill
\text{by \eqref{nonzero};}
}\\
\ \
\to\infty,\ t\to\infty,
\hfill
\end{multline*}
($M\ge 1$ is  from \eqref{bounded}), which contradicts the boundedness of $y(\cdot)$ proving \eqref{zero}.

For the scalar type spectral operator
\begin{equation*}
A_-:=AE_A\left(\left\{\lambda \in \sigma(A)\,
\middle|\,\Rep\lambda\le 0\right\}\right),
\end{equation*}
by the properties of the operational calculus (see {\cite[Theorem XVIII.$2.11$]{Dun-SchIII}}),
\begin{equation}\label{spleft}
\sigma(A_-)\subseteq \left\{\lambda \in \sigma(A)\,
\middle|\,\Rep \lambda\le 0\right\}
\end{equation}
which implies by {\cite[Proposition $3.1$]{Markin2002(2)}} (cf. \cite{Panchapagesan1969}) that the operator $A_-$ generates the $C_0$-semigroup of its exponentials:
\begin{equation}\label{exp3}
e^{tA_-}=e^{tA}E_A\left(\left\{\lambda \in \sigma(A)\,
\middle|\,\Rep\lambda\le 0\right\}\right)
+E_A\left(\left\{\lambda \in \sigma(A)\,
\middle|\,\Rep\lambda>0\right\}\right),\ t\ge 0.
\end{equation}

Furthermore, by the properties of the operational calculus (see {\cite[Theorem XVIII.$2.11$]{Dun-SchIII}}),
inclusion \eqref{spleft} implies that the semigroup
$\left\{e^{tA_-}\right\}_{t\ge 0}$ is \textit{bounded}.

In view of \eqref{zero}, 
\begin{equation*}
f=E_A(\{\lambda\in\sigma(A)|\Rep\lambda\le 0\})f,
\end{equation*}
and hence, representation \eqref{exp3} yields
\[
y(t)=e^{tA}f=e^{tA_-}f,\ t\ge 0,
\]
which, by  
{\cite[Corollary 1]{Goldstein-Radin-Showalter}} and Theorem \ref{DSD}, implies that
\begin{equation}\label{lim}
s\mbox{-}\!\!\lim_{t \to \infty}\frac{1}{t} \int_0^t e^{tA}f\,ds
=
s\mbox{-}\!\!\lim_{t \to \infty}\frac{1}{t} \int_0^t e^{tA_-}f\,ds=E_{A_-}(\{0\})f.
\end{equation}

Since 
\[
A_-=F(A)\ \text{with}\ F(z):=z\chi_{\left\{\lambda \in \sigma(A)\,\middle|\,\Rep\lambda\le 0\right\}}(z),\
z\in \C,
\]
by {\cite[Theorem $3.3$]{Bade1954}} (see also \cite{Dun-SchIII}),
\begin{multline*}
E_{A_-}(\{0\})=E_{A}\left(F^{-1}(\{0\})\right),
=E_{A}\left(\{0\}\cup \left\{\lambda \in \sigma(A)\,
\middle|\,\Rep\lambda> 0\right\}\right)
\\
\ \
=E_{A}\left(\{0\}\right)+E_{A}\left(\left\{\lambda \in \sigma(A)\,
\middle|\,\Rep\lambda> 0\right\}\right).
\hfill
\end{multline*}

Considering the latter, by \eqref{zero}, \eqref{lim} implies
\begin{equation*}
s\mbox{-}\!\!\lim_{t \to \infty}\frac{1}{t} \int_0^t e^{tA}f\,ds
=E_{A}(\{0\})f.
\end{equation*}

Let us show that
\begin{equation}\label{neg}
s\mbox{-}\!\!\lim_{t\to\infty}
e^{tA}E_A(\{\lambda\in\sigma(A)|\Rep\lambda<0\})f=0.
\end{equation}

Let $\eps>0$ be arbitrary and, by the strong continuity of the spectral measure, fix an $n\in\N$ such that
\begin{equation}\label{eest1}
4M\left\|E_A\left(\{\lambda\in\sigma(A)|-1/n<\Rep\lambda<0\}\right)f\right\|<\eps/2
\end{equation}
and let $T>0$ be such that, for each $t\ge T$,
\begin{equation}\label{eest2}
4M\|f\|e^{-t/n}<\eps/2,
\end{equation}
the constant $M\ge 1$ in \eqref{eest1} and \eqref{eest2}
being from \eqref{bounded}.

Then, for each $t\ge T$,
\begin{multline*}
\left\|e^{tA}E_A(\{\lambda\in\sigma(A)|\Rep\lambda<0\})f\right\|
\\
\shoveleft{
\hfill
\text{by the properties of the \textit{operational calculus};}
}\\
\shoveleft{
=
\left\|\int\limits_{\{\lambda\in\sigma(A)|\Rep\lambda<0\}}
e^{t\lambda}\,dE_A(\lambda)f\right\|
}\\
\hfill
\text{as follows from the \textit{Hahn-Banach Theorem};}
\\
\shoveleft{
=\sup_{\{g^*\in X^*|\|g^*\|=1\}}
\left|\left\langle
\int\limits_{\{\lambda\in\sigma(A)|\Rep\lambda<0\}}
e^{t\lambda}\,d E_A(\lambda)f,g^*\right\rangle
\right|
}\\
\hfill
\text{by the properties of the \textit{operational calculus};}
\\
\shoveleft{
= \sup_{\{g^*\in X^*|\|g^*\|=1\}}
\left|\int\limits_{\{\lambda\in\sigma(A)|\Rep\lambda<0\}}
e^{t\lambda}\,d\langle E_A(\lambda)f,g^*\rangle\right|
}\\
\shoveleft{
\le \sup_{\{g^*\in X^*|\|g^*\|=1\}}\int\limits_{\{\lambda\in\sigma(A)|\Rep\lambda<0\}}
\left|e^{t\lambda}\right|\,dv(f,g^*,\lambda) 
}\\
\shoveleft{
= \sup_{\{g^*\in X^*|\|g^*\|=1\}}
\int\limits_{\{\lambda\in\sigma(A)|\Rep\lambda<0\}}
e^{t\Rep\lambda}\,dv(f,g^*,\lambda)
}\\
\shoveleft{
= \sup_{\{g^*\in X^*|\|g^*\|=1\}}\biggl[
\int\limits_{\{\lambda\in\sigma(A)|\Rep\lambda\le -1/n\}}
e^{t\Rep\lambda}\,dv(f,g^*,\lambda)
}\\
\shoveleft{
+
\int\limits_{\{\lambda\in\sigma(A)|-1/n<\Rep\lambda<0\}}
e^{t\Rep\lambda}\,dv(f,g^*,\lambda)
\biggr]
}\\
\shoveleft{
\le \sup_{\{g^*\in X^*|\|g^*\|=1\}}\biggl[
\int\limits_{\{\lambda\in\sigma(A)|\Rep\lambda\le -1/n\}}
e^{-t/n}\,dv(f,g^*,\lambda)
}\\
\shoveleft{
+
\int\limits_{\{\lambda\in\sigma(A)|-1/n<\Rep\lambda<0\}}
1\,dv(f,g^*,\lambda)
\biggr]
\hfill
\text{by \eqref{tv} and \eqref{cond(ii)} with $F(\lambda)\equiv 1$};
}\\
\shoveleft{
\le \sup_{\{g^*\in X^*|\|g^*\|=1\}}\left[
e^{-t/n}4M\|f\|\|g^*\|
+4M\left\|E_A(\{\lambda\in\sigma(A)|-1/n<\Rep\lambda<0\})f\right\|\|g^*\|
\right]
}\\
\shoveleft{
\le 4M\|f\|e^{-t/n}
+4M\left\|E_A(\{\lambda\in\sigma(A)|-1/n<\Rep\lambda<0\})f\right\|
}\\
\hfill
\text{by the \eqref{eest1} and \eqref{eest2};}
\\
\ \
<\eps.
\hfill
\end{multline*}

Whence, \eqref{neg} follows.

If the spectrum of $A$ contains no pure imaginary values, by \eqref{zero} and \eqref{neg},
\begin{multline*}
s\mbox{-}\!\!\lim_{t\to\infty}y(t)
=s\mbox{-}\!\!\lim_{t\to\infty}e^{tA}
=s\mbox{-}\!\!\lim_{t\to\infty}
\left[
e^{tA}E_A(\{\lambda\in\sigma(A)|\Rep\lambda<0\})
+
E_A(\{0\})
\right]f
\\
\ \
=
s\mbox{-}\!\!\lim_{t\to\infty}e^{tA}E_A(\{\lambda\in\sigma(A)|\Rep\lambda<0\})f
+
E_A(\{0\})f=E_A(\{0\})f.
\hfill
\end{multline*}

Thus, the proof is complete.
\end{proof}

As an important particular case, we obtain the following 

\begin{thm}
If $A$ is a normal operator in a complex Hilbert space $X$, for each bounded weak solution $y(\cdot)$ of equation \eqref{1},
\begin{equation*}
s\mbox{-}\!\!\lim_{t\to\infty}\dfrac{1}{t}\int_0^t y(s)\,ds=Py(0),
\end{equation*}
where $P$ is the orthogonal projection onto $\ker A$.

The spectrum of $A$ containing no pure imaginary values,
\begin{equation*}
s\mbox{-}\!\!\lim_{t\to\infty}y(t)=Py(0).
\end{equation*}
\end{thm}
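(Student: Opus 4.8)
The plan is to obtain this statement as a direct specialization of Theorem \ref{Thm4}. The only things requiring comment are that a normal operator fits the framework of scalar type spectral operators and that, in the Hilbert-space setting, the pertinent spectral projection $E_A(\{0\})$ is the \emph{orthogonal} projection onto $\ker A$.

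First I would recall the classical fact that a normal operator $A$ in a complex Hilbert space $X$ is a scalar type spectral operator whose spectral measure $E_A(\cdot)$ is precisely the projection-valued Borel measure furnished by the spectral theorem for normal operators, the Borel operational calculus of Subsection \ref{prelim1} then reducing to the familiar functional calculus for normal operators (see the references cited there). In particular, every $E_A(\delta)$, with $\delta\subseteq\C$ Borel, is an \emph{orthogonal} projection, so the bound \eqref{bounded} holds with $M=1$. Consequently Theorem \ref{Thm4} applies verbatim and yields, for every bounded weak solution $y(\cdot)$ of equation \eqref{1},
\begin{equation*}
s\mbox{-}\!\!\lim_{t\to\infty}\dfrac{1}{t}\int_0^t y(s)\,ds=E_A(\{0\})y(0),
\end{equation*}
as well as, when $\sigma(A)$ contains no pure imaginary values,
\begin{equation*}
s\mbox{-}\!\!\lim_{t\to\infty}y(t)=E_A(\{0\})y(0).
\end{equation*}

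It then remains to identify $P:=E_A(\{0\})$ as the orthogonal projection onto $\ker A$. By Theorem \ref{DSD}, $E_A(\{0\})X=\ker A$ and $E_A(\sigma(A)\setminus\{0\})X=\overline{R(A)}$; since $E_A(\{0\})$ is an orthogonal projection, its range $\ker A$ and the range $\overline{R(A)}$ of the complementary orthogonal projection $E_A(\sigma(A)\setminus\{0\})$ are mutually orthogonal, so $E_A(\{0\})$ is exactly the orthogonal projection of $X$ onto $\ker A$. Substituting $P=E_A(\{0\})$ into the two limits above gives the assertion.

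There is no real obstacle here: the statement is essentially a translation of Theorem \ref{Thm4} into Hilbert-space language. The only point needing a line of justification is the standard fact that the spectral resolution of a normal operator consists of orthogonal projections and that the Dunford--Schwartz Borel operational calculus restricts to the usual normal operator calculus on a Hilbert space; once this is recorded, everything else is immediate.
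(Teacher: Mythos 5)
Your proposal is correct and follows exactly the paper's route: the paper states this theorem as an immediate particular case of Theorem \ref{Thm4}, using the fact that a normal operator is scalar type spectral with its spectral measure consisting of orthogonal projections, so that $E_A(\{0\})$ is the orthogonal projection onto $\ker A$ (via Theorem \ref{DSD}). Your identification of $P=E_A(\{0\})$ is precisely the intended justification, so nothing is missing.
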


In view of the fact that the spectrum of a self-adjoint operator is located on the real axis (see, e.g., \cite{Dun-SchII,Plesner}), we instantly arrive at

\begin{cor}
If $A$ is a self-adjoint operator in a complex Hilbert space, for each bounded weak solution $y(\cdot)$ of equation \eqref{1},
\begin{equation*}
s\mbox{-}\!\!\lim_{t\to\infty}y(t)=Py(0),
\end{equation*}
where $P$ is the orthogonal projection onto $\ker A$.
\end{cor}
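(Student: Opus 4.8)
The plan is to derive this corollary as an immediate specialization of the preceding theorem on normal operators, which itself follows from Theorem~\ref{Thm4} by observing that a normal operator in a complex Hilbert space is (in particular) a scalar type spectral operator whose spectral measure is the usual orthogonal projection-valued measure, so that $E_A(\{0\})$ is precisely the orthogonal projection onto $\ker A$. Thus the only thing to verify for the corollary is that the hypothesis ``$\sigma(A)$ contains no pure imaginary values'' is automatically satisfied when $A$ is self-adjoint.

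First I would recall that for a self-adjoint operator $A$ in a complex Hilbert space one has $\sigma(A)\subseteq\R$; this is the standard fact cited in the excerpt (see, e.g., \cite{Dun-SchII,Plesner}). Consequently the only point of $\sigma(A)$ that could be pure imaginary is $0$ itself, and $0$, being real, is not a \emph{pure imaginary} value in the relevant sense (the pure imaginary values excluded in Theorem~\ref{Thm4} are those on $i\R$; whether one reads $i\R$ as including or excluding $0$, the second conclusion of Theorem~\ref{Thm4} already accounts for the point $0$ separately through the term $E_A(\{0\})f$). So the spectral hypothesis of the normal-operator theorem holds vacuously beyond the origin.

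Next I would simply invoke the second assertion of the theorem on normal operators: since $\sigma(A)\cap(i\R\setminus\{0\})=\varnothing$, for every bounded weak solution $y(\cdot)$ of \eqref{1} we have $s\text{-}\!\lim_{t\to\infty}y(t)=Py(0)$, where $P$ is the orthogonal projection onto $\ker A$. That is exactly the statement of the corollary, so no further argument is needed.

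There is no real obstacle here; the content is entirely a matter of quoting the spectral localization of self-adjoint operators and feeding it into the already-established normal-operator case. The only point requiring a word of care is the interpretation of ``no pure imaginary values'' at the origin, which I would address by the one-line remark above that the behavior at $0$ is handled by the $E_A(\{0\})$ term in Theorem~\ref{Thm4} regardless of convention. Hence the proof is essentially a single sentence.

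\begin{proof}\quad
Since $A$ is self-adjoint, its spectrum $\sigma(A)$ lies on the real axis (see, e.g., \cite{Dun-SchII,Plesner}), and hence contains no pure imaginary values other than, possibly, $0$, whose contribution is accounted for separately by the projection $P=E_A(\{0\})$. The conclusion now follows immediately from the preceding theorem applied to the normal (in particular, self-adjoint) operator $A$.
\end{proof}
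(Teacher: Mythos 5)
Your proposal is correct and matches the paper's own (one-line) argument: the corollary is deduced from the normal-operator theorem by noting that a self-adjoint operator has real spectrum, hence no (nonzero) pure imaginary spectral values, with the point $0$ already accounted for by the projection $P=E_A(\{0\})$. Your extra remark about the convention at the origin is consistent with how the paper uses ``pure imaginary'' (i.e., $i\R\setminus\{0\}$) and introduces no gap.
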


The latter generalizes the complex version of {\cite[Proposition 24]{Haraux}}, which states that, $A$ being a nonpositive self-adjoint operator in a Hilbert space $X$, for any $f\in X$,
\begin{equation*}
s\mbox{-}\!\!\lim_{t\to\infty}e^{tA}f=Py(0),
\end{equation*}
where $P$ is the orthogonal projection onto $\ker A$.

\subsection{Concluding Remarks}
\begin{enumerate}[label=\arabic*.]
\item As follows from
Theorem \ref{DSD}
and {\cite[Theorem 3.2]{Markin2017(1)}},
a scalar type spectral operator $A$ in a complex Banach space $(X,\|\cdot\|)$
is reducibly invertible \textit{iff} $0$ is either its \textit{regular point} or an \textit{isolated point of spectrum} (see also {\cite[Corollary 3.2]{Markin2017(1)}}). In this case, Theorem \ref{Thm4} is consistent with part (ii) of Theorem \ref{Thm2}.
\item When $(X,\|\cdot\|)$ is a complex reflexive space, Theorem \ref{Thm4} is also consistent with part (ii) of Theorem \ref{Thm3}. 
\item When $(X,\|\cdot\|)$ is a complex finite-dimensional space, 
an operator $A\in L(X)$
is scalar type spectral \textit{iff} $X$ has an \textit{eigenbasis} for $A$,
i.e., the Jordan canonical matrix representation of $A$
is a \textit{diagonal matrix} 
(see, e.g., \cite{Dunford1954,Survey58,Dun-SchII}).
In this case, $0$ being either a \textit{regular point} of $A$ or an \textit{eigenvalue}, whose {\it index} is equal to $1$, i.e., all Jordan blocks corresponding to $0$ are of size 1, Theorem \ref{Thm4} is consistent with Theorem \ref{Thm1} as well.
\end{enumerate}

\section{Dedication}

With deep sadness and utmost appreciation, 
I dedicate this work
to the loving memory of my recently departed teacher,
Dr. Miroslav L. Gorbachuk, whose
life and work have so profoundly inspired and influenced many mathematicians, whom I am blessed and honored to be one of.

 
\end{document}